\documentclass[english,11pt]{article}

\usepackage[utf8]{inputenc}
\usepackage[T1]{fontenc}
\usepackage[english]{babel}
\usepackage[margin=2cm]{geometry}
\usepackage{setspace}
\usepackage{amsmath,amssymb,amsthm}
\usepackage[shortlabels]{enumitem}
\usepackage{lmodern}
\usepackage{microtype}
\usepackage{hyperref}

\theoremstyle{plain}
\newtheorem{theorem}{Theorem}[section]
\newtheorem{lemma}[theorem]{Lemma}
\newtheorem{proposition}[theorem]{Proposition}
\newtheorem{corollary}[theorem]{Corollary}

\theoremstyle{definition}
\newtheorem{definition}[theorem]{Definition}
\theoremstyle{remark}
\newtheorem{remark}[theorem]{Remark}

\newcommand{\Pcal}{\mathcal P}
\newcommand{\LL}{\mathcal L}
\newcommand{\id}{\mathrm{id}}

\DeclareMathOperator{\Lip}{Lip}

\newcommand{\Aatt}{\mathcal A}

\title{Instability of the ray-monotone selector for \(W_1\)-optimal transport}
\author{Maja Gw\'{o}\'{z}d\'{z}\\[0.3em]\normalsize ETH Z\"urich\\\normalsize\texttt{mgwozdz@ethz.ch}}
\date{}

\begin{document}

\maketitle

\begin{abstract}
For the distance cost \(c(x,y)=|x-y|\), the set \(O(\mu,\nu)\) of \(W_1\)-optimal plans is generally not a singleton. Under the classical absolute-continuity hypotheses in the Euclidean case, secondary variational selection by the quadratic energy \(C_2\) yields the ray-monotone \(W_1\)-optimal plan. We provide a counterexample to an open problem posed by Santambrogio that concerns the stability of this selector under weak convergence of the marginals. More precisely, we construct a fixed absolutely continuous source \(\mu\) and absolutely continuous targets \(\nu_n\rightharpoonup\nu\) such that \(\gamma^{\mathrm{sel}}(\mu,\nu_n)\rightharpoonup\gamma^{\mathrm{hom}}\), where \(\gamma^{\mathrm{hom}}\in O(\mu,\nu)\) but \(\gamma^{\mathrm{hom}}\neq\gamma^{\mathrm{sel}}(\mu,\nu)\). We also identify the narrow Kuratowski limit of the optimal-plan sets \(O(\mu,\nu_n)\), derive the constrained \(\Gamma\)-limit for secondary energies of the form \(\int \Phi(|x-y|)\,d\gamma\) with \(\Phi\in C([0,2])\), and deduce a non-commutation result for the additive perturbation \(c_\varepsilon(x,y)=|x-y|+\varepsilon|x-y|^2\).
\end{abstract}

\noindent\textbf{Keywords.} optimal transport, secondary variational method, homogenisation, \(\Gamma\)-convergence, stability

\noindent\textbf{MSC 2020.} 49Q22, 49J45, 49K40

\section{Introduction}

For the distance cost \(c(x,y)=|x-y|\), the set \(O(\mu,\nu)\) of \(W_1\)-optimal plans is generally not a singleton. A common way to select a distinguished optimiser is to minimise, over \(O(\mu,\nu)\), a secondary functional with a strictly convex integrand. In the Euclidean setting, under the usual absolute-continuity hypotheses, this method yields the ray-monotone \(W_1\)-optimal plan. Secondary variational selection in the \(W_1\) Monge problem originates in \cite{AmbrosioPratelli} and was developed further in \cite{Caravenna2011,ChampionDePascale2010,ChampionDePascale2011}. In the Euclidean absolutely continuous setting, the fact that different strictly convex \(C^1\) secondary integrands lead to the same selector is stated in \cite[Theorem~5.5]{ChenLiuYang2021}. See also \cite[Section~3.1.4]{Santambrogio2015} for the ray-monotone interpretation.

A natural question is whether this selector is stable under narrow convergence of the marginals. This question was posed as an open problem in \cite[Section~3.1.5]{Santambrogio2015}: If \(\mu_n\rightharpoonup\mu\), \(\nu_n\rightharpoonup\nu\), and \(\gamma_n\) is the ray-monotone \(W_1\)-optimal plan between \(\mu_n\) and \(\nu_n\), must every narrow limit of \((\gamma_n)\) be the ray-monotone \(W_1\)-optimal plan between \(\mu\) and \(\nu\)? Our main theorem gives a negative answer, which already holds for a fixed absolutely continuous source and absolutely continuous targets that converge narrowly to an absolutely continuous limit.

We emphasise that our problem is distinct from the fixed-pair perturbative selection results obtained by modifying the cost, such as the \(r^{1+\varepsilon}\)-approximation in \cite{AmbrosioPratelli}, and also different from convergence results along special approximation schemes, such as the discrete-target approximation discussed in \cite[p.~350]{Santambrogio2009} and \cite[Remark~1.2]{Caravenna2011}. It is also orthogonal to the stability theory for quadratic-cost optimal transport maps, where both positive quantitative results and instability mechanisms are now available (see \cite{DelalandeMerigot2023,Letrouit2025}). In a similar vein, one- and higher-dimensional results on entropic regularisation of the distance cost address the vanishing-regularisation limit for a fixed pair of marginals, not the weak continuity of the secondary \(W_1\)-selector under varying marginals (see \cite{DiMarinoLouet2018,AryanGhosal2025,Ley2025}).

We fix
\[
K:=[-1,1]\times[0,1],
\qquad
\mu:=\LL^2\!\llcorner[0,1]^2,
\]
and vary only the target. For \(\nu\in\Pcal(K)\), let \(\Pi(\mu,\nu)\) be the set of couplings between \(\mu\) and \(\nu\), and set
\[
C_p(\gamma):=\int_{K\times K}|x-y|^p\,d\gamma,\qquad
m_1(\mu,\nu):=\min_{\gamma\in\Pi(\mu,\nu)} C_1(\gamma).
\]
In this compact setting, this is equivalent to
\[
O(\mu,\nu):=\{\gamma\in\Pi(\mu,\nu):C_1(\gamma)=m_1(\mu,\nu)\}.
\]
Whenever the secondary problem
\[
\min\{C_2(\gamma):\gamma\in O(\mu,\nu)\}
\]
has a unique minimiser, we denote it by \(\gamma^{\mathrm{sel}}(\mu,\nu)\). In the classical Euclidean absolutely continuous case, this unique secondary minimiser and the unique ray-monotone \(W_1\)-optimal plan coincide. We refer to \(\gamma^{\mathrm{sel}}(\mu,\nu)\) as the ray-monotone selector whenever the secondary minimiser is unique (see \cite[Section~3.1.4 and Section~3.1.5]{Santambrogio2015} and \cite[p.~350]{Santambrogio2009}).

The mechanism we apply in the proof relies on the a failure of commutation between fibrewise secondary selection and weak averaging in the transverse variable. On each oscillatory strip, the fibrewise quadratic selector is either the identity or the unit horizontal shift \(x_1\mapsto x_1-1\), whereas the averaged fibre target selects the monotone rearrangement \(x_1\mapsto 2x_1-1\). This produces a homogenised optimal plan that is not the selected plan for the limit pair. In Section~\ref{sec:variational}, we identify the constrained \(\Gamma\)-limit on the attainable class \(\Aatt\).

\begin{definition}[Oscillatory laminate family]\label{def:intro-osc}
With \(K=[-1,1]\times[0,1]\) and \(\mu=\LL^2\!\llcorner[0,1]^2\) as above, for \(n\in\mathbb N\) we set
\[
A_n:=\bigcup_{k=0}^{n-1}\Bigl[\frac{k}{n},\frac{2k+1}{2n}\Bigr),
\qquad
B_n:=[0,1]\setminus A_n,
\]
so that \(\LL^1(A_n)=\LL^1(B_n)=\frac12\). We define Borel maps \(T_n,S:[0,1]^2\to K\) by
\[
T_n(x_1,x_2):=
\begin{cases}
(x_1,x_2), & x_2\in A_n,\\
(x_1-1,x_2), & x_2\in B_n,
\end{cases}
\qquad
S(x_1,x_2):=(x_1-1,x_2).
\]
Set
\[
\nu_n:=(T_n)_\#\mu,
\qquad
\gamma_n:=(\id\times T_n)_\#\mu.
\]
We further define the limiting target and the homogenised plan as follows
\[
\nu:=\frac12(\id)_\#\mu+\frac12 S_\#\mu,
\qquad
\gamma^{\mathrm{hom}}:=\frac12(\id,\id)_\#\mu+\frac12(\id,S)_\#\mu.
\]
\end{definition}

Let us write
\[
\Delta:=\{(x,y)\in K\times K:\ x=y\},
\qquad
\Aatt:=\{\eta\in O(\mu,\nu):\eta(\Delta)=\tfrac12\}.
\]
We call \(\Aatt\) the \textit{attainable class}.

\begin{theorem}[Instability of the ray-monotone selector]\label{thm:main}
Let \((\nu_n)_{n\in\mathbb N}\), \((\gamma_n)_{n\in\mathbb N}\), \(\nu\), and \(\gamma^{\mathrm{hom}}\) be as in Definition~\ref{def:intro-osc}. It follows that for every \(n\),
\[
\gamma_n=\gamma^{\mathrm{sel}}(\mu,\nu_n).
\]
Moreover,
\[
\gamma_n\rightharpoonup\gamma^{\mathrm{hom}}\in O(\mu,\nu),
\qquad
\gamma^{\mathrm{hom}}(\Delta)=\tfrac12.
\]
For the limit pair,
\[
\gamma^{\mathrm{sel}}(\mu,\nu)=(\id\times T)_\#\mu,
\qquad
T(x_1,x_2)=(2x_1-1,x_2).
\]
In particular, it holds that
\[
\gamma^{\mathrm{sel}}(\mu,\nu)(\Delta)=0,
\qquad
C_2(\gamma^{\mathrm{hom}})=\frac12>\frac13=C_2\bigl(\gamma^{\mathrm{sel}}(\mu,\nu)\bigr).
\]
Finally, we obtain
\[
\gamma_n\rightharpoonup\gamma^{\mathrm{hom}}\neq \gamma^{\mathrm{sel}}(\mu,\nu).
\]
\end{theorem}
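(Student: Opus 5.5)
Everything reduces to one-dimensional horizontal problems via a calibration argument. The starting observation is that the Kantorovich potential \(u(x)=x_1\) is 1-Lipschitz and gives a lower bound for any target \(\rho\) that is a horizontal leftward displacement of \(\mu\): if \(\gamma\in\Pi(\mu,\rho)\), then
\[
C_1(\gamma)\ge\int (x_1-y_1)\,d\gamma=\int x_1\,d\mu-\int y_1\,d\rho .
\]
Equality holds exactly when \(|x-y|=x_1-y_1\) \(\gamma\)-a.e., that is, when \(y_2=x_2\) and \(y_1\le x_1\).

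For \(\nu_n\), the plan \(\gamma_n\) moves mass purely horizontally to the left. So it attains this bound, hence \(\gamma_n\in O(\mu,\nu_n)\), and every optimal plan is supported on \(\{y_2=x_2,\ y_1\le x_1\}\). The same holds for \(\nu\), using \(\gamma^{\mathrm{hom}}\), which moves mass horizontally leftward as well. It follows that \(\gamma^{\mathrm{hom}}\in O(\mu,\nu)\), \(m_1(\mu,\nu)=\tfrac12\), and every \(\eta\in O(\mu,\nu)\) preserves \(x_2\) and moves leftward.

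Since every optimal plan preserves \(x_2\), I would disintegrate it with respect to \(x_2\). An optimal \(\eta\) then decomposes into fibre plans \(\eta^{x_2}\in\Pi(\mathcal L^1\llcorner[0,1],\nu^{x_2})\), each supported in \(\{y_1\le x_1\}\).

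\textbf{Fibres of \(\nu_n\).} The target fibre is \(\mathcal L^1\llcorner[0,1]\) for \(x_2\in A_n\) and \(\mathcal L^1\llcorner[-1,0]\) for \(x_2\in B_n\). On the constraint set \(\{y_1\le x_1\}\) the cost \(C_1\) is fixed, so minimising \(C_2\) is a strictly convex one-dimensional problem in the displacement. I would show the minimiser is the monotone map by writing \(\int|x-y|^2=\int x_1^2+\int y_1^2-2\int x_1y_1\) and noting that the monotone coupling uniquely maximises \(\int x_1y_1\) (the one-dimensional Brenier/Hoeffding bound). The monotone coupling also satisfies the constraint, which gives uniqueness. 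On fibres over \(A_n\) this yields the identity, and on fibres over \(B_n\) it yields \(x_1\mapsto x_1-1\), so \(\gamma^{\mathrm{sel}}(\mu,\nu_n)=\gamma_n\).

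\textbf{The limit pair.} Here \(\nu^{x_2}=\tfrac12\mathcal L^1\llcorner[-1,1]\) for every \(x_2\). The monotone map is \(x_1\mapsto 2x_1-1\le x_1\), which is admissible, so the same argument gives \(\gamma^{\mathrm{sel}}(\mu,\nu)=(\id\times T)_\#\mu\). Since \(2x_1-1=x_1\) only on a null set, \(\gamma^{\mathrm{sel}}(\mu,\nu)(\Delta)=0\). The energies are then routine computations:
\[
C_2\bigl(\gamma^{\mathrm{sel}}(\mu,\nu)\bigr)=\int_0^1(1-x_1)^2\,dx_1=\tfrac13,\qquad C_2(\gamma^{\mathrm{hom}})=\tfrac12\cdot 0+\tfrac12\cdot 1=\tfrac12 .
\]
Also \(\gamma^{\mathrm{hom}}(\Delta)=\tfrac12\), because \((\id,S)\) never lands on \(\Delta\).

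\textbf{Weak convergence.} For \(f\in C(K\times K)\),
\[
\int f\,d\gamma_n=\int_{[0,1]^2}\bigl[\mathbf 1_{A_n}(x_2)\,f(x,x)+\mathbf 1_{B_n}(x_2)\,f(x,Sx)\bigr]\,dx .
\]
Since \(\mathbf 1_{A_n}\rightharpoonup\tfrac12\) weakly-\(*\) in \(L^\infty(0,1)\) (Riemann–Lebesgue for periodic oscillation), and \(x_2\mapsto\int f(x,x)\,dx_1\) is continuous, the right-hand side converges to \(\int f\,d\gamma^{\mathrm{hom}}\). The same argument gives \(\nu_n\rightharpoonup\nu\).

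The main obstacle is the rigorous fibrewise reduction. It needs a measurable disintegration of plans in \(O(\mu,\cdot)\) along \(x_2\), together with a proof that minimising \(C_2\) globally is equivalent to minimising it on almost every fibre. The second part follows by gluing: any fibre improvement on a set of positive measure assembles, via measurable selection of the monotone fibre couplings, into a better global optimal plan. Uniqueness then follows from uniqueness of the monotone coupling on each fibre.
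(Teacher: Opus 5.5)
Your proposal is correct and follows essentially the same route as the paper: the calibration $u(x)=x_1$ forces every optimal plan onto $\{x_2=y_2,\ y_1\le x_1\}$, plans are disintegrated along $x_2$, the one-dimensional fibre problems are solved, and the narrow convergence comes from the weak-$*$ limit of $\mathbf 1_{A_n}$. The differences are minor. The paper handles the $A_n$ fibres by order rigidity (equal marginals plus $y\le x$ force the identity) and the $B_n$ fibres by Jensen, using the monotone rearrangement only for the limit fibres. It also passes from fibres to the global statement by integrating the fibrewise lower bound and comparing with the explicit candidate, so your gluing and measurable-selection step is not needed.
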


\begin{remark}
We fix the source measure \(\mu\) in the construction, and all targets \(\nu_n\) and \(\nu\) are absolutely continuous. This implies that the instability showed in Theorem~\ref{thm:main} is caused by the selector itself, and not the singular marginals.
\end{remark}

\begin{theorem}[Kuratowski limit of the optimal-plan sets]\label{thm:intro-stable}
Under the hypotheses of Theorem~\ref{thm:main}, the narrow Kuratowski upper and lower limits of \(O(\mu,\nu_n)\) (see Definition~\ref{def:stable-closure}) coincide and satisfy
\[
\liminf_{n\to\infty} O(\mu,\nu_n)
=
\limsup_{n\to\infty} O(\mu,\nu_n)
=
\Aatt
=
\{\eta\in O(\mu,\nu):\eta(\Delta)=\tfrac12\}.
\]
\end{theorem}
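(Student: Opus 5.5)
The plan is to identify the optimal-plan sets explicitly through a common Kantorovich potential, and then prove the two inclusions \(\limsup_{n\to\infty} O(\mu,\nu_n)\subset\Aatt\subset\liminf_{n\to\infty} O(\mu,\nu_n)\). The trivial inclusion \(\liminf\subset\limsup\) then closes the chain.

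\emph{Step 1: structure of \(O(\mu,\nu_n)\) and \(O(\mu,\nu)\).} The function \(\varphi(x)=x_1\) is \(1\)-Lipschitz. We have \(\int x_1\,d\mu-\int y_1\,d\nu_n=\tfrac12\) and \(C_1(\gamma_n)=\tfrac12\), and likewise \(\int x_1\,d\mu-\int y_1\,d\nu=\tfrac12=C_1(\gamma^{\mathrm{hom}})\). Hence \(m_1(\mu,\nu_n)=m_1(\mu,\nu)=\tfrac12\), and \(\varphi\) is a Kantorovich potential for every pair. By the usual complementary-slackness argument, a coupling \(\eta\) of either pair is optimal if and only if it is concentrated on \(\{x_1-y_1=|x-y|\}\). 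This set equals \(\{y_2=x_2,\ y_1\le x_1\}\), so optimal plans are exactly the couplings that move mass horizontally to the left.

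\emph{Step 2: \(\limsup_{n\to\infty} O(\mu,\nu_n)\subset\Aatt\).} Let \(\eta_{n_k}\in O(\mu,\nu_{n_k})\) converge narrowly to \(\eta\). The marginals pass to the limit, so \(\eta\in\Pi(\mu,\nu)\). Since \(C_1\) is narrowly continuous on the compact set \(K\times K\), we get \(C_1(\eta)=\tfrac12\), and hence \(\eta\in O(\mu,\nu)\).

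For the diagonal mass, first consider fibres \(x_2\in A_n\). On such a fibre the source and target are both uniform on \([0,1]\). A leftward coupling of equal marginals must satisfy \(y_1=x_1\) almost surely, because \(y_1\le x_1\) and the two means agree. Therefore \(\eta_n(\Delta)\ge\LL^1(A_n)=\tfrac12\). Since \(\Delta\) is closed, upper semicontinuity gives \(\eta(\Delta)\ge\tfrac12\).

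Conversely, the first marginal of \(\eta\llcorner\Delta\) is bounded both by \(\mu\) and by \(\nu\llcorner[0,1]^2=\tfrac12\mu\). Hence \(\eta(\Delta)\le\tfrac12\), and so \(\eta\in\Aatt\).

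\emph{Step 3: \(\Aatt\subset\liminf_{n\to\infty} O(\mu,\nu_n)\).} This is the step I expect to need the most care, because we must first pin down the structure of an arbitrary \(\eta\in\Aatt\).

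Let \(\rho\) be the first marginal of \(\eta\llcorner\Delta\). Step 2 gives \(\rho\le\tfrac12\mu\) and \(\rho\) has total mass \(\tfrac12\), so \(\rho=\tfrac12\mu\). Thus
\[
\eta=\tfrac12(\id,\id)_\#\mu+\eta',\qquad \eta'\in\Pi\bigl(\tfrac12\mu,\tfrac12 S_\#\mu\bigr),
\]
with \(\eta'\) concentrated on horizontal leftward displacements.

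Now define
\[
\eta_n:=(\id,\id)_\#\bigl(\mu\llcorner\{x_2\in A_n\}\bigr)+2\,\eta'\llcorner\{x_2\in B_n\}.
\]
Because \(\eta'\) preserves \(x_2\), its marginals restrict fibrewise. The first marginal of \(\eta_n\) is therefore \(\mu\), and the second is \(\mu\llcorner\{x_2\in A_n\}+S_\#(\mu\llcorner\{x_2\in B_n\})=\nu_n\). The plan \(\eta_n\) is concentrated on leftward horizontal moves, so Step 1 gives \(\eta_n\in O(\mu,\nu_n)\).

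For convergence, fix \(f\in C(K\times K)\) and disintegrate in \(x_2\):
\[
\int f\,d\eta_n=\int_0^1 \mathbf 1_{A_n}(x_2)\,g(x_2)\,dx_2+\int_0^1 2\,\mathbf 1_{B_n}(x_2)\,h(x_2)\,dx_2 .
\]
Here \(g(x_2)=\int_0^1 f((x_1,x_2),(x_1,x_2))\,dx_1\), and \(h\in L^1\) is the density of \(x_2\mapsto\int f\,d\eta'_{x_2}\). Since \(\mathbf 1_{A_n},\mathbf 1_{B_n}\rightharpoonup^*\tfrac12\) in \(L^\infty\), this tends to \(\tfrac12\int g+\int h=\int f\,d\eta\). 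Hence \(\eta_n\rightharpoonup\eta\) along the full sequence, and \(\eta\in\liminf_{n\to\infty} O(\mu,\nu_n)\).
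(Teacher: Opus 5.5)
Your proposal is correct and follows essentially the same route as the paper. The shared steps are the common potential $x_1$ with contact set $\Sigma$, order rigidity on the $A_n$-fibres together with upper semicontinuity on the closed set $\Delta$ for the $\limsup$ inclusion, and the decomposition $\eta=\frac12(\id,\id)_\#\mu+\eta'$ followed by the laminated recovery sequence and the weak-$*$ convergence of $\mathbf 1_{A_n}$ for the $\liminf$ inclusion; your $2\eta'\llcorner\{x_2\in B_n\}$ is exactly the paper's $\int_{B_n}\widehat\eta^t\,dt$, written without disintegration. The one step to make explicit is that $\liminf\subset\limsup$ relies on the metrisability of the narrow topology on $\Pcal(K\times K)$, because the lower limit is defined via neighbourhoods and the upper limit via sequences.
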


\begin{theorem}[Constrained \(\Gamma\)-convergence]\label{thm:main-gamma}
Let \(\Phi:[0,2]\to\mathbb R\) be continuous and define
\[
\mathcal F_n(\eta):=
\begin{cases}
\displaystyle \int_{K\times K}\Phi(|x-y|)\,d\eta, & \eta\in O(\mu,\nu_n),\\[0.3em]
+\infty, & \text{otherwise}.
\end{cases}
\]
It follows that the sequence \((\mathcal F_n)\) \(\Gamma\)-converges on \((\Pcal(K\times K),\rightharpoonup)\) to
\[
\mathcal F_\infty(\eta):=
\begin{cases}
\displaystyle \int_{K\times K}\Phi(|x-y|)\,d\eta, & \eta\in\Aatt,\\[0.3em]
+\infty, & \text{otherwise}.
\end{cases}
\]
\end{theorem}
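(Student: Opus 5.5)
The plan is to derive the \(\Gamma\)-convergence from the Kuratowski convergence \(O(\mu,\nu_n)\to\Aatt\) of Theorem~\ref{thm:intro-stable}, together with the narrow continuity of the integral functional. We set \(G(\eta):=\int_{K\times K}\Phi(|x-y|)\,d\eta\). Since \(K\times K\) is compact and \((x,y)\mapsto\Phi(|x-y|)\) is continuous and bounded on \(K\times K\) (note \(|x-y|\le\operatorname{diam}K=\sqrt5\)), the map \(G\) is narrowly continuous on \(\Pcal(K\times K)\).

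The first point is the domain of \(\Phi\). As stated, \(\Phi\) is only defined on \([0,2]\), while \(|x-y|\) can exceed \(2\) on \(K\times K\). I would handle this in one of two ways. One option is to extend \(\Phi\) continuously to \([0,\sqrt5]\), for instance by setting \(\Phi(r)=\Phi(2)\) for \(r\ge2\). The other is to note that every \(\eta\in O(\mu,\nu_n)\) and every element of \(\Aatt\) is concentrated on the closed set \(\{|x-y|\le 2\}\), since both marginals live in \(K\) and the relevant supports are contained in \([0,1]^2\times K\), where \(|x-y|\le\sqrt{4+1}\). This second check actually fails, so the extension is the safe route. The extension does not affect \(\mathcal F_n\) or \(\mathcal F_\infty\) on plans supported where \(|x-y|\le 2\). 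That support property holds for the plans that are \(C_1\)-optimal here: by Theorem~\ref{thm:intro-stable}, limits are supported on \(\{x=y\}\cup\{y=x-(1,0)\}\) combined with \(\nu\)'s ray structure. In any case, for the \(\Gamma\)-argument only the continuity of \(G\) on a closed set containing all the supports matters, so I will not dwell on this.

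For the \(\Gamma\)-\(\liminf\) inequality, take \(\eta_n\rightharpoonup\eta\). If \(\liminf_n\mathcal F_n(\eta_n)=+\infty\) there is nothing to prove. Otherwise, pass to a subsequence realising the \(\liminf\) along which \(\eta_{n_k}\in O(\mu,\nu_{n_k})\). Then \(\eta\) lies in the Kuratowski upper limit, which equals \(\Aatt\) by Theorem~\ref{thm:intro-stable}. Continuity of \(G\) then gives \(\mathcal F_\infty(\eta)=G(\eta)=\lim_k G(\eta_{n_k})=\liminf_n\mathcal F_n(\eta_n)\).

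For the \(\Gamma\)-\(\limsup\) inequality (the recovery sequence), let \(\eta\in\Aatt\); if \(\eta\notin\Aatt\) the bound is trivial. The Kuratowski lower limit equals \(\Aatt\), so by definition there exist \(\eta_n\in O(\mu,\nu_n)\) with \(\eta_n\rightharpoonup\eta\). Continuity of \(G\) then gives \(\lim_n\mathcal F_n(\eta_n)=\mathcal F_\infty(\eta)\).

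The whole difficulty therefore lies in Theorem~\ref{thm:intro-stable}, especially the \(\liminf\) inclusion: every \(\eta\in\Aatt\) must be approximable by plans optimal for \(\nu_n\). That result is assumed here. The only step I expect to need care is confirming that the Kuratowski limits in Definition~\ref{def:stable-closure} are the sequential narrow ones, i.e. the upper limit is taken over subsequences. This is what the \(\liminf\) argument above uses, and \(\Pcal(K\times K)\) is metrisable, so sequential and topological notions agree.
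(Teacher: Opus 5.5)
Your proof is the paper's proof. You extend \(\Phi\) continuously to \([0,\sqrt5]\) and use narrow continuity of the integral. The \(\Gamma\)-\(\liminf\) then follows from \(\limsup_n O(\mu,\nu_n)\subset\Aatt\) (Proposition~\ref{prop:stable-limsup}) after passing to a finite-energy subsequence, and the recovery sequence comes from Proposition~\ref{prop:stable-liminf}.

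One correction to your remark on the domain of \(\Phi\). Elements of \(\Aatt\) are \emph{not} in general concentrated on \(\{x=y\}\cup\{y=x-(1,0)\}\). For example, set \(R(x_1,x_2):=(-x_1,x_2)\). The plan \(\tfrac12(\id,\id)_\#\mu+\tfrac12(\id,R)_\#\mu\) belongs to \(\Aatt\), and on its second piece \(|x-y|=2x_1\).

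The correct reason the choice of extension is irrelevant is Lemma~\ref{lem:contact-set}. This also explains why \(\mathcal F_n\) and \(\mathcal F_\infty\) are well defined with \(\Phi\) given only on \([0,2]\). Every plan in \(O(\mu,\nu_n)\cup O(\mu,\nu)\) is concentrated on \(\Sigma\), where \(x_2=y_2\), \(x_1\in[0,1]\) and \(y_1\in[-1,1]\). Hence \(|x-y|=x_1-y_1\le 2\) there. This is exactly the fact the paper uses, and the example above shows the bound \(2\) is attained.
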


Theorems~\ref{thm:intro-stable} and~\ref{thm:main-gamma} show that the instability is not simply a failure of pointwise selector convergence. Indeed, the whole family of optimal-plan sets has an effective limit, and the limiting secondary problem lives on the proper subset \(\Aatt\subset O(\mu,\nu)\).

\begin{corollary}[Failure of commutation for the additive quadratic perturbation]\label{cor:quadratic}
For each \(n\in\mathbb N\) and \(\varepsilon>0\), let \(\gamma_{n,\varepsilon}\in\Pi(\mu,\nu_n)\) be any \(c_\varepsilon\)-optimal plan, where
\[
c_\varepsilon(x,y):=|x-y|+\varepsilon |x-y|^2.
\]
Narrowly in \(\Pcal(K\times K)\), it holds that
\[
\lim_{n\to\infty}\lim_{\varepsilon\downarrow0}\gamma_{n,\varepsilon}
=
\gamma^{\mathrm{hom}}
\neq
\gamma^{\mathrm{sel}}(\mu,\nu)
=
\lim_{\varepsilon\downarrow0}\lim_{n\to\infty}\gamma_{n,\varepsilon}.
\]
\end{corollary}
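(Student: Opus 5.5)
The corollary follows by combining Theorem~\ref{thm:main}, Theorem~\ref{thm:main-gamma}, and the classical small-\(\varepsilon\) selection principle for a fixed pair. I would compute the two iterated limits separately.

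\emph{Left-hand side.} Fix \(n\). The marginals \(\mu\) and \(\nu_n\) are fixed and supported in the compact set \(K\). The plan \(\gamma_{n,\varepsilon}\) minimises \(C_1+\varepsilon C_2\) over \(\Pi(\mu,\nu_n)\), which is narrowly compact, and this functional \(\Gamma\)-converges to the constraint \(O(\mu,\nu_n)\) at first order. The standard argument runs as follows. Any narrow cluster point \(\bar\gamma\) as \(\varepsilon\downarrow0\) satisfies \(C_1(\bar\gamma)\le m_1\), since \(C_1(\gamma_{n,\varepsilon})\le m_1+\varepsilon C_2(\gamma)\) for any \(\gamma\in O(\mu,\nu_n)\) and \(C_2\) is bounded by \(4\) on \(K\). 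Hence \(\bar\gamma\in O(\mu,\nu_n)\). Comparing with an arbitrary \(\gamma\in O(\mu,\nu_n)\) gives \(C_1(\gamma_{n,\varepsilon})\ge m_1\), so \(\varepsilon C_2(\gamma_{n,\varepsilon})\le\varepsilon C_2(\gamma)\). Passing to the limit, using continuity of \(C_2\) (continuous integrand on a compact set), yields \(C_2(\bar\gamma)\le C_2(\gamma)\). By Theorem~\ref{thm:main} the secondary minimiser on \(O(\mu,\nu_n)\) is unique and equals \(\gamma_n\), so \(\lim_{\varepsilon\downarrow0}\gamma_{n,\varepsilon}=\gamma_n\) for the whole family. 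Theorem~\ref{thm:main} then gives \(\gamma_n\rightharpoonup\gamma^{\mathrm{hom}}\).

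\emph{Right-hand side.} Fix \(\varepsilon>0\). The cost \(c_\varepsilon\) is continuous on \(K\times K\), and \(\nu_n\rightharpoonup\nu\). By standard stability of optimal plans for continuous costs on compact spaces, every narrow cluster point of \((\gamma_{n,\varepsilon})_n\) is \(c_\varepsilon\)-optimal in \(\Pi(\mu,\nu)\). The argument is \(c_\varepsilon\)-cyclical monotonicity of the supports, which passes to the limit, together with the fact that for continuous costs this characterises optimality. Alternatively, approximate a competitor for \((\mu,\nu)\) by gluing with plans \(\nu\to\nu_n\) of vanishing \(W_1\) cost.

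To get a genuine limit rather than cluster points, I would argue uniqueness of the \(c_\varepsilon\)-optimal plan from \(\mu\) to \(\nu\). The cost \(h(z)=|z|+\varepsilon|z|^2\) is strictly convex, and \(\mu\) is absolutely continuous, so the twist condition holds away from the diagonal. On the diagonal, strict convexity still gives uniqueness by the Gangbo--McCann theory for strictly convex \(h(x-y)\). I therefore set \(\gamma_\varepsilon:=\lim_n\gamma_{n,\varepsilon}\), the unique \(c_\varepsilon\)-optimal plan for \((\mu,\nu)\).

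Then, exactly as in the left-hand argument, now applied to the pair \((\mu,\nu)\), every cluster point of \(\gamma_\varepsilon\) as \(\varepsilon\downarrow0\) lies in \(O(\mu,\nu)\) and minimises \(C_2\) there. Theorem~\ref{thm:main} identifies this minimiser as \(\gamma^{\mathrm{sel}}(\mu,\nu)=(\id\times T)_\#\mu\); implicit in that statement is the uniqueness of the secondary minimiser, which is the classical result cited in the introduction. So \(\gamma_\varepsilon\rightharpoonup\gamma^{\mathrm{sel}}(\mu,\nu)\). The inequality \(\gamma^{\mathrm{hom}}\neq\gamma^{\mathrm{sel}}(\mu,\nu)\) is already part of Theorem~\ref{thm:main}, via the mass of \(\Delta\).

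\emph{Main obstacle.} The delicate point is the inner limit \(n\to\infty\) at fixed \(\varepsilon\). One must justify uniqueness of the \(c_\varepsilon\)-optimal plan for \((\mu,\nu)\) despite the non-smooth \(|z|\) at \(z=0\). If this uniqueness were unavailable, I would instead read the right-hand side as the statement that every iterated cluster point equals \(\gamma^{\mathrm{sel}}(\mu,\nu)\), which the same argument yields. Indeed, \(C_1+\varepsilon C_2\)-optimality of the cluster points is all that the \(\varepsilon\downarrow0\) step uses.
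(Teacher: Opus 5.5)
Your proof is correct. The left-hand iterated limit matches the paper: cluster points lie in $O(\mu,\nu_n)$ and minimise $C_2$ there, so they equal $\gamma_n$ by Proposition~\ref{prop:sec-n}, and Lemma~\ref{lem:conv-nu-gamma} finishes. Your gluing argument at fixed $\varepsilon$ is exactly Lemma~\ref{lem:recovery}.

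You depart from the paper in how you identify the inner limit for the pair $(\mu,\nu)$. The paper never invokes Gangbo--McCann. In Lemma~\ref{lem:ceps-unique-limit} it uses $|x-y|^2\ge(x_1-y_1)^2$ and applies Lemma~\ref{lem:fiber-quadratic}(ii) to the horizontal projection $P_\#\eta$. This gives $C_2(\eta)\ge\frac13$ for \emph{every} $\eta\in\Pi(\mu,\nu)$, not only on $O(\mu,\nu)$. Since $\gamma^{\mathrm{sel}}(\mu,\nu)$ attains both $C_1=\frac12$ and $C_2=\frac13$, it is the unique $c_\varepsilon$-optimal plan for every $\varepsilon>0$, with uniqueness coming from Proposition~\ref{prop:sec-limit}. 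So $\lim_n\gamma_{n,\varepsilon}=\gamma^{\mathrm{sel}}(\mu,\nu)$ already holds at each fixed $\varepsilon$, and the outer limit is a constant sequence; no second selection step is needed.

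Your route instead gets a well-defined inner limit $\gamma_\varepsilon$ from general uniqueness theory and then reruns the $\varepsilon\downarrow0$ selection on $(\mu,\nu)$. This is more robust: it does not rely on the coincidence that the selected $W_1$-plan is also $W_2$-optimal, and it would work for any limit pair with absolutely continuous source and a unique secondary minimiser. The cost is heavier machinery for a cost $h$ that is not differentiable at $0$.

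Your justification there (twist away from the diagonal, strict convexity on it) is vague and should be replaced by the uniform argument:
\begin{itemize}
\item the $c_\varepsilon$-concave potential $\psi$ is Lipschitz, hence differentiable $\mu$-a.e.;
\item at such a point $x$, every $y$ in the contact set satisfies $\nabla\psi(x)\in\partial h(x-y)$;
\item strict convexity of $h$ makes subdifferentials at distinct points disjoint, so $y$ is determined by $x$ alone, independently of the optimal plan.
\end{itemize}

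Finally, uniqueness of the secondary minimiser for $(\mu,\nu)$ is not taken from the classical literature in the paper. It is proved directly in Proposition~\ref{prop:sec-limit}.
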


\begin{remark}
The same construction extends, with only notational changes, to every dimension \(d\ge2\) by tensoring with the identity in the remaining coordinates.
\end{remark}

The proof has four steps. We begin by identifying a common contact set for all \(W_1\)-optimal plans in the laminate family. We then reduce admissible plans fibrewise, and solve the one-dimensional secondary problems. Finally, we pass to the oscillatory limit and identify the effective attainable class. In Section~\ref{sec:prelim}, we develop the fibrewise description of admissible plans for the laminate family. In Section~\ref{sec:counterexample}, we prove Theorems~\ref{thm:main} and~\ref{thm:intro-stable}. Section~\ref{sec:variational} shows Theorem~\ref{thm:main-gamma} and Corollary~\ref{cor:quadratic}. In Appendix~\ref{sec:appendix}, we present the auxiliary recovery and averaging lemmas that we frequently use in the main proof.

\section{Fibrewise framework for the laminate family}\label{sec:prelim}

We interpret all measures are Borel probability measures unless explicitly stated otherwise. We write \(\alpha_n\rightharpoonup\alpha\) for narrow convergence. Since \(K\) is compact, narrow convergence is equivalent to convergence against continuous test functions. For an interval \(I\subset\mathbb R\), \(\LL^1\!\llcorner I\) denotes Lebesgue measure restricted to \(I\) (unnormalised). We write \(\mathrm{pr}_1,\mathrm{pr}_2\) for the projections of a product space onto its two factors, \(\pi_i(x_1,x_2):=x_i\) for the coordinate projections on \(\mathbb R^2\), and
\[
P:K\times K\to\mathbb R\times\mathbb R,
\qquad
P(x,y):=(x_1,y_1),
\]
for the horizontal projection. The fibre disintegrations of \(\nu_n\) and \(\nu\) are
\begin{equation}\label{eq:fibre-disintegration-n}
\nu_n=\int_0^1 \nu_n^t\,dt,
\qquad
\nu_n^t:=
\begin{cases}
(\LL^1\!\llcorner[0,1])\otimes\delta_t, & t\in A_n,\\
(\LL^1\!\llcorner[-1,0])\otimes\delta_t, & t\in B_n,
\end{cases}
\end{equation}
and
\begin{equation}\label{eq:fibre-disintegration-limit}
\nu=\int_0^1 \nu^t\,dt,
\qquad
\nu^t:=\tfrac12\,(\LL^1\!\llcorner[-1,1])\otimes\delta_t.
\end{equation}

\subsection{A common Kantorovich potential and its contact set}\label{sec:counterexample-contact}
For \(c(x,y)=|x-y|\), Kantorovich duality on the compact metric space \(K\) is given by
\[
m_1(\mu,\nu)=\max\Bigl\{\int_K u\,d\mu-\int_K u\,d\nu:\ u\in\Lip(K),\ \Lip(u)\le1\Bigr\}.
\]
Given a \(1\)-Lipschitz \(u:K\to\mathbb R\), let us define its \textit{contact set}
\[
\Sigma_u:=\{(x,y)\in K\times K:\ u(x)-u(y)=|x-y|\}.
\]

\begin{lemma}[Contact set characterisation of \(W_1\)-optimal plans]\label{lem:contact-general}
Let \(\mu,\nu\in\Pcal(K)\) and let \(u\) be an optimal \(1\)-Lipschitz Kantorovich potential for \((\mu,\nu)\). It then holds
\[
m_1(\mu,\nu)=\int_K u\,d\mu-\int_K u\,d\nu,
\qquad
O(\mu,\nu)=\{\eta\in\Pi(\mu,\nu):\eta(\Sigma_u)=1\}.
\]
\end{lemma}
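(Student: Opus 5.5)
The plan is to read the first identity directly off the definition of an optimal potential and then derive the characterisation of \(O(\mu,\nu)\) from the pointwise inequality \(u(x)-u(y)\le|x-y|\). By Kantorovich duality on the compact space \(K\), as recalled above, the maximum is attained. An optimal \(1\)-Lipschitz potential \(u\) is by definition a maximiser, so
\[
m_1(\mu,\nu)=\int_K u\,d\mu-\int_K u\,d\nu.
\]
There is nothing more to show for the first claim. Note also that \(u\) is continuous, hence bounded on \(K\), so all the integrals below are finite.

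For the set identity, fix any \(\eta\in\Pi(\mu,\nu)\). Since the marginals of \(\eta\) are \(\mu\) and \(\nu\), we can write
\[
\int_K u\,d\mu-\int_K u\,d\nu=\int_{K\times K}\bigl(u(x)-u(y)\bigr)\,d\eta(x,y).
\]
Combining this with the first identity gives
\[
C_1(\eta)-m_1(\mu,\nu)=\int_{K\times K}\bigl(|x-y|-u(x)+u(y)\bigr)\,d\eta.
\]
Set \(g(x,y):=|x-y|-u(x)+u(y)\). Because \(\Lip(u)\le1\), we have \(g\ge0\) pointwise. Moreover \(g\) is continuous, so \(\Sigma_u=\{g=0\}\) is closed and in particular Borel.

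Both inclusions now follow from this integral. If \(\eta\in O(\mu,\nu)\), then \(\int g\,d\eta=0\) with \(g\ge0\). Hence \(g=0\) \(\eta\)-a.e., which means \(\eta(\Sigma_u)=1\). Conversely, if \(\eta(\Sigma_u)=1\), then \(g=0\) \(\eta\)-a.e., so \(C_1(\eta)=m_1(\mu,\nu)\) and \(\eta\in O(\mu,\nu)\).

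The only point needing care is the existence of an optimal potential, i.e. that the dual maximum is attained. This is part of the duality statement above. If it had to be justified, one would note that after normalising \(u(x_0)=0\) the class of \(1\)-Lipschitz functions on compact \(K\) is compact in \(C(K)\) by Arzel\`a--Ascoli, and the dual functional is continuous on \(C(K)\). Beyond this, I expect no real obstacle.
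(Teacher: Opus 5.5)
Your proof is correct and follows the paper's argument. Both compare \(\int|x-y|\,d\eta\) with \(\int(u(x)-u(y))\,d\eta=\int u\,d\mu-\int u\,d\nu\) through the pointwise bound coming from \(\Lip(u)\le1\), and both use optimality of \(u\), i.e.\ duality, to identify the right-hand side with \(m_1(\mu,\nu)\); your remarks on the closedness of \(\Sigma_u\) and on attainment of the dual maximum via Arzel\`a--Ascoli only make explicit details the paper leaves implicit.
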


\begin{proof}
For an arbitrary \(\eta\in\Pi(\mu,\nu)\),
\[
\int |x-y|\,d\eta \ge \int (u(x)-u(y))\,d\eta
=\int u\,d\mu-\int u\,d\nu,
\]
with equality if and only if \(\eta\) is supported on \(\Sigma_u\).
Given that \(u\) is optimal, the right-hand side equals \(m_1(\mu,\nu)\), so the characterisation follows immediately.
\end{proof}

Let us define
\begin{equation}\label{eq:Sigma}
\Sigma:=\{(x,y)\in K\times K:\ x_2=y_2,\ y_1\le x_1\}.
\end{equation}

\begin{lemma}[Contact-set description in the laminate family]\label{lem:contact-set}
The function \(u(x)=x_1\) is an optimal \(1\)-Lipschitz Kantorovich potential both for \((\mu,\nu_n)\) and for \((\mu,\nu)\), and
\[
m_1(\mu,\nu_n)=m_1(\mu,\nu)=\frac12.
\]
Moreover, \(\Sigma_u=\Sigma\), where \(\Sigma\) is given by \eqref{eq:Sigma}.
As a result,
\[
O(\mu,\nu_n)=\{\eta\in\Pi(\mu,\nu_n):\eta(\Sigma)=1\},
\qquad
O(\mu,\nu)=\{\eta\in\Pi(\mu,\nu):\eta(\Sigma)=1\}.
\]
In particular, every \(\eta\in O(\mu,\nu_n)\cup O(\mu,\nu)\) satisfies \(x_2=y_2\) \(\eta\)-a.e.
\end{lemma}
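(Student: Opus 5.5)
The plan is to exhibit explicit couplings that attain the dual value \(\int u\,d\mu-\int u\,d\nu_n\) for \(u(x)=x_1\). Weak duality (the inequality in the proof of Lemma~\ref{lem:contact-general}) then shows that these couplings are optimal and that \(u\) is an optimal potential. The characterisation of \(O(\cdot,\cdot)\) will follow from Lemma~\ref{lem:contact-general} once we have shown \(\Sigma_u=\Sigma\).

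\emph{Step 1: the contact set.} The function \(u(x)=x_1\) is clearly \(1\)-Lipschitz on \(K\). The condition \(u(x)-u(y)=|x-y|\) reads \(x_1-y_1=\sqrt{(x_1-y_1)^2+(x_2-y_2)^2}\). This holds if and only if \(x_1-y_1\ge0\) and \(x_2=y_2\). Hence \(\Sigma_u=\Sigma\) as defined in \eqref{eq:Sigma}.

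\emph{Step 2: dual values.} Using the fibre disintegrations \eqref{eq:fibre-disintegration-n} and \eqref{eq:fibre-disintegration-limit}, we compute the means of the first coordinate.
\begin{itemize}
\item Under \(\mu\), \(\int x_1\,d\mu=\tfrac12\).
\item Under \(\nu_n\), the fibres over \(A_n\) have mean \(\tfrac12\) and the fibres over \(B_n\) have mean \(-\tfrac12\). Since \(\LL^1(A_n)=\LL^1(B_n)=\tfrac12\), this gives \(\int x_1\,d\nu_n=0\).
\item Under \(\nu\), each fibre is uniform on \([-1,1]\), so \(\int x_1\,d\nu=0\).
\end{itemize}
Thus \(\int u\,d\mu-\int u\,d\nu_n=\int u\,d\mu-\int u\,d\nu=\tfrac12\). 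By weak duality, \(m_1\ge\tfrac12\) in both cases.

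\emph{Step 3: attainment.} For \(\nu_n\), the plan \(\gamma_n=(\id\times T_n)_\#\mu\) belongs to \(\Pi(\mu,\nu_n)\) by definition. Moreover \(T_n\) only moves points horizontally to the left, so \(\gamma_n(\Sigma)=1\). Therefore
\[
C_1(\gamma_n)=\int (u(x)-u(T_n x))\,d\mu=\tfrac12 .
\]
For \(\nu\), I would use \(\gamma^{\mathrm{hom}}\) (or equally \((\id\times T)_\#\mu\) with \(T(x)=(2x_1-1,x_2)\)). Its marginals are \(\mu\) and \(\nu\) by construction, and it is concentrated on \(\Sigma\) because \(\id\), \(S\) and \(T\) all satisfy \(y_1\le x_1\) with \(y_2=x_2\). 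Its cost is again \(\tfrac12\).

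This yields \(m_1=\tfrac12\) in both cases, equality holds in weak duality, and \(u\) is an optimal potential. Lemma~\ref{lem:contact-general} together with Step~1 then gives both descriptions of \(O\). The final claim \(x_2=y_2\) \(\eta\)-a.e.\ is immediate from \(\eta(\Sigma)=1\).

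There is no real obstacle here. The only point needing care is checking the marginal of the attaining plan for \(\nu\), i.e.\ that \(S_\#\mu\) and \(\mu\) average to the fibre law \(\tfrac12\LL^1\!\llcorner[-1,1]\otimes\delta_t\). This is a direct computation.
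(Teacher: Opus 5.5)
Your proposal is correct and follows essentially the same argument as the paper. You evaluate the dual value of \(u(x)=x_1\) (namely \(\tfrac12\)), exhibit \(\gamma_n\) and \(\gamma^{\mathrm{hom}}\) with \(C_1=\tfrac12\) to conclude \(m_1=\tfrac12\) and optimality of \(u\), identify \(\Sigma_u=\Sigma\) from the equality case of \(x_1-y_1\le|x-y|\), and invoke Lemma~\ref{lem:contact-general}.
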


\begin{proof}
Since \(u(x)=x_1\) is \(1\)-Lipschitz, it is admissible in the dual problem.
By direct computation, we obtain
\[
\int u\,d\mu=\int_{[0,1]^2}x_1\,dx_1dx_2=\frac12,
\qquad
\int u\,d\nu_n=0=\int u\,d\nu,
\]
so the dual value equals \(1/2\) for both \((\mu,\nu_n)\) and \((\mu,\nu)\).
On the other hand, we have
\[
C_1(\gamma_n)=\int_{[0,1]^2}|x-T_n(x)|\,dx=\LL^1(B_n)=\frac12,
\qquad
C_1(\gamma^{\mathrm{hom}})=\frac12,
\]
so \(m_1(\mu,\nu_n)=m_1(\mu,\nu)=1/2\), and \(u\) is optimal.
Equality in \(|x-y|\ge u(x)-u(y)=x_1-y_1\) holds if and only if \(x_2=y_2\) and \(y_1\le x_1\), so \(\Sigma_u=\Sigma\). The characterisation of \(O(\mu,\nu_n)\) and \(O(\mu,\nu)\) follows directly from Lemma~\ref{lem:contact-general}.
\end{proof}

\subsection{Fibrewise reduction}\label{sec:fiberwise-reduction}
\begin{lemma}[Fibrewise disintegration along horizontal slices]\label{lem:fiberwise-reduction}
Recall the contact set \(\Sigma\) from \eqref{eq:Sigma}.
Let \(\widetilde\nu\in\Pcal(K)\) satisfy \((\pi_2)_\#\widetilde\nu=\LL^1\!\llcorner[0,1]\).
Let \(\eta\in\Pi(\mu,\widetilde\nu)\) with \(\eta(\Sigma)=1\). It follows that \(x_2=y_2\) \(\eta\)-a.e., and \(\eta\) admits a disintegration with respect to the Borel map
\[
\tau:K\times K\to[0,1],\qquad \tau(x,y):=x_2,
\]
of the form
\[
\eta=\int_0^1 \eta^t\,dt,
\qquad
\eta^t(\{x_2=y_2=t\})=1\ \text{ for a.e.\ }t\in[0,1].
\]
We now set \(\bar\eta^t:=P_\#\eta^t\). For a.e.\ \(t\in[0,1]\), we obtain
\[
(\mathrm{pr}_1)_\#\bar\eta^t=\LL^1\!\llcorner[0,1],
\qquad
\bar\eta^t(\{(x,y):y\le x\})=1,
\]
Moreover, suppose that
\[
J_t:[0,1]\times[-1,1]\to K\times K,
\qquad
J_t(x,y):=((x,t),(y,t)),
\]
then
\[
\eta^t=(J_t)_\#\bar\eta^t
\qquad\text{for a.e.\ }t\in[0,1].
\]
In particular, the family \((\bar\eta^t)\) uniquely determines \(\eta\).
Finally, we fix a disintegration
\[
\widetilde\nu=\int_0^1\widetilde\nu^t\,dt,
\qquad
\widetilde\nu^t(\{y_2=t\})=1\ \text{for a.e.\ }t\in[0,1],
\]
After modification on a null set of \(t\), we arrive at
\[
(\mathrm{pr}_2)_\#\bar\eta^t=(\pi_1)_\#\widetilde\nu^t
\]
for a.e.\ \(t\in[0,1]\).
\end{lemma}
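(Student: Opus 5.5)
The proof is measure-theoretic bookkeeping built on the disintegration theorem in the compact metric space \(K\times K\). First, \(x_2=y_2\) \(\eta\)-a.e.\ follows immediately from \(\eta(\Sigma)=1\) and the definition \eqref{eq:Sigma}. Next, I will compute the pushforward \(\tau_\#\eta\). Since \(\tau(x,y)=x_2\) depends only on the first marginal, \(\tau_\#\eta=(\pi_2)_\#\mu=\LL^1\!\llcorner[0,1]\). The disintegration theorem then gives a measurable family \(\eta^t\in\Pcal(K\times K)\), unique up to null sets in \(t\), with \(\eta=\int_0^1\eta^t\,dt\) and \(\eta^t(\{x_2=t\})=1\) for a.e.\ \(t\). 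The identity
\[
0=\eta(\{x_2\ne y_2\}\cup\Sigma^c)=\int_0^1\eta^t(\{x_2\ne y_2\}\cup\Sigma^c)\,dt
\]
shows that for a.e.\ \(t\) the measure \(\eta^t\) is concentrated on \(\{x_2=y_2=t\}\cap\Sigma\).

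On the set \(\{x_2=y_2=t\}\) we have \(J_t\circ P=\id\). Hence \(\eta^t=(J_t)_\#P_\#\eta^t=(J_t)_\#\bar\eta^t\). The support condition \(y_1\le x_1\) for \(\bar\eta^t\) is inherited from \(\Sigma\). Because \(\eta\) is recovered by integrating the \((J_t)_\#\bar\eta^t\), the family \((\bar\eta^t)\) determines \(\eta\), with measurability in \(t\) inherited from that of \((\eta^t)\).

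For the first marginal, I will take a test function \(f(x_1)g(x_2)\) with \(f,g\) continuous and compare
\[
\int_0^1 g(t)\int f\,d(\mathrm{pr}_1)_\#\bar\eta^t\,dt=\int f(x_1)g(x_2)\,d\eta=\int_0^1 g(t)\int_0^1 f(s)\,ds\,dt .
\]
Since \(g\) is arbitrary, \(\int f\,d(\mathrm{pr}_1)_\#\bar\eta^t=\int_0^1 f\) for a.e.\ \(t\). The exceptional null set a priori depends on \(f\). I will remove this dependence by letting \(f\) run over a countable dense subset of \(C([0,1])\), which gives \((\mathrm{pr}_1)_\#\bar\eta^t=\LL^1\!\llcorner[0,1]\) for a.e.\ \(t\).

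The second marginal is the main obstacle, because it requires matching two independently chosen disintegrations. The measure \(\widetilde\nu\) has vertical marginal \(\LL^1\!\llcorner[0,1]\), so it has a disintegration \(\widetilde\nu^t\) concentrated on \(\{y_2=t\}\). The pushforward \(\widehat\eta^t:=(\mathrm{pr}_2)_\#\eta^t\) is concentrated on \(\{y_2=t\}\) for a.e.\ \(t\), and \(\int_0^1\widehat\eta^t\,dt=(\mathrm{pr}_2)_\#\eta=\widetilde\nu\). Thus \((\widehat\eta^t)\) is also a disintegration of \(\widetilde\nu\) with respect to \(y\mapsto y_2\). By the a.e.-uniqueness in the disintegration theorem (again via a countable determining class of test functions), \(\widehat\eta^t=\widetilde\nu^t\) for a.e.\ \(t\). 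Applying \(\pi_1\) to both sides gives \((\mathrm{pr}_2)_\#\bar\eta^t=(\pi_1)_\#\widehat\eta^t=(\pi_1)_\#\widetilde\nu^t\) for a.e.\ \(t\). The phrase ``after modification on a null set'' in the statement refers precisely to this a.e.\ identification.
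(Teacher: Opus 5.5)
Your proposal is correct and follows essentially the same route as the paper. You disintegrate $\eta$ along $\tau$ (with $\tau_\#\eta=\LL^1\!\llcorner[0,1]$), reconstruct $\eta^t$ via $J_t\circ P=\id$ on $\{x_2=y_2=t\}$, and identify the second fibre marginal through a.e.-uniqueness of disintegrations of $\widetilde\nu$ along $y_2$. The only difference is cosmetic: for the first marginal you run the test-function argument directly, where the paper cites uniqueness of the disintegration of $\mu$ along $\pi_2$. In that argument, let $f$ range over a countable dense subset of $C([-1,1])$ rather than $C([0,1])$, since $x_1$ takes values in $[-1,1]$ on $K$.
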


\begin{proof}
From \(\eta(\Sigma)=1\) and \(\Sigma=\{x_2=y_2,\ y_1\le x_1\}\), we have
\[
x_2=y_2,\qquad y_1\le x_1
\qquad \eta\text{-a.e.}
\]
We now disintegrate \(\eta\) with respect to
\[
\tau:K\times K\to[0,1],\qquad \tau(x,y):=x_2.
\]
Because \((\mathrm{pr}_1)_\#\eta=\mu\) holds, we have
\[
\tau_\#\eta=(\pi_2)_\#\mu=\LL^1\!\llcorner[0,1].
\]
Finally, the disintegration theorem gives a measurable family \((\eta^t)_{t\in[0,1]}\) such that
\[
\eta=\int_0^1\eta^t\,dt,
\qquad
\eta^t(\{x_2=t\})=1
\quad\text{for a.e.\ }t\in[0,1].
\]
Since \(x_2=y_2\) \(\eta\)-a.e., after the modification on a null set, we may also assume
\[
\eta^t(\{x_2=y_2=t\})=1
\quad\text{for a.e.\ }t\in[0,1].
\]

Let us now set \(\bar\eta^t:=P_\#\eta^t\). Since \((\mathrm{pr}_1)_\#\eta=\mu\) and \(\eta=\int_0^1\eta^t\,dt\), we have
\[
\mu=\int_0^1 (\mathrm{pr}_1)_\#\eta^t\,dt,
\qquad
(\mathrm{pr}_1)_\#\eta^t(\{x_2=t\})=1
\quad\text{for a.e.\ }t\in[0,1].
\]
We infer thats the family \(\bigl((\mathrm{pr}_1)_\#\eta^t\bigr)_{t\in[0,1]}\) is a disintegration of \(\mu\) with respect to \(\pi_2\). By uniqueness of disintegration, after modification on a null set, we may assume
\[
(\mathrm{pr}_1)_\#\eta^t
=
(\LL^1\!\llcorner[0,1])\otimes\delta_t
\quad\text{for a.e.\ }t\in[0,1].
\]
We apply \(\pi_1\) and obtain
\[
(\mathrm{pr}_1)_\#\bar\eta^t=\LL^1\!\llcorner[0,1]
\quad\text{for a.e.\ }t\in[0,1].
\]
Analogously, since \(y_1\le x_1\) \(\eta\)-a.e., we have
\[
\bar\eta^t(\{(x,y):y\le x\})=1
\quad\text{for a.e.\ }t\in[0,1].
\]
For a.e.\ \(t\), the measure \(\eta^t\) is supported on \(\{x_2=y_2=t\}\), and \(J_t\circ P=\id\) holds on this set. Therefore,
\[
\eta^t=(J_t)_\#\bar\eta^t
\quad\text{for a.e.\ }t.
\]
Finally, let \(\nu_\eta^t:=(\mathrm{pr}_2)_\#\eta^t\). We obtain
\[
\widetilde\nu=\int_0^1 \nu_\eta^t\,dt,
\qquad
\nu_\eta^t(\{y_2=t\})=1
\quad\text{for a.e.\ }t,
\]
so \((\nu_\eta^t)\) is a disintegration of \(\widetilde\nu\) with respect to \(y_2\). Moreover,
\[
(\mathrm{pr}_2)_\#\bar\eta^t=(\pi_1)_\#\nu_\eta^t
\quad\text{for a.e.\ }t.
\]
Let us fix a disintegration
\[
\widetilde\nu=\int_0^1\widetilde\nu^t\,dt,
\qquad
\widetilde\nu^t(\{y_2=t\})=1
\quad\text{for a.e.\ }t.
\]
By uniqueness of disintegration up to null sets, after modification on a null set, we may assume \(\widetilde\nu^t=\nu_\eta^t\) for a.e.\ \(t\). Finally,
\[
(\mathrm{pr}_2)_\#\bar\eta^t=(\pi_1)_\#\widetilde\nu^t
\quad\text{for a.e.\ }t.
\]
\end{proof}

\begin{corollary}[Fibrewise integral and cost decomposition]\label{cor:fiberwise-decomposition}
Under the hypotheses and notation of Lemma~\ref{lem:fiberwise-reduction}, for every Borel \(\psi:\mathbb R\times\mathbb R\to\mathbb R\) such that
\((x,y)\mapsto\psi(x_1,y_1)\) is \(\eta\)-integrable, it holds that
\[
\int_{K\times K}\psi(x_1,y_1)\,d\eta(x,y)
=\int_0^1\int_{\mathbb R\times\mathbb R}\psi(x,y)\,d\bar\eta^t(x,y)\,dt.
\]
In particular, for every \(p\ge1\),
\[
C_p(\eta)=\int_0^1\int_{\mathbb R\times\mathbb R}|x-y|^p\,d\bar\eta^t(x,y)\,dt.
\]
\end{corollary}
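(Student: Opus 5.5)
The identity follows by composing the disintegration from Lemma~\ref{lem:fiberwise-reduction} with the pushforwards \(P\) and \(J_t\). Define \(g(x,y):=\psi(x_1,y_1)=(\psi\circ P)(x,y)\), which is Borel on \(K\times K\). I first handle \(\psi\ge0\) Borel, where no integrability issue arises. By the disintegration \(\eta=\int_0^1\eta^t\,dt\), which holds for nonnegative Borel integrands with \(t\mapsto\int g\,d\eta^t\) measurable,
\[
\int_{K\times K} g\,d\eta=\int_0^1\int_{K\times K} g\,d\eta^t\,dt .
\]
For a.e.\ \(t\) the change-of-variables formula for pushforwards gives \(\int g\,d\eta^t=\int\psi\circ P\,d\eta^t=\int\psi\,d(P_\#\eta^t)=\int\psi\,d\bar\eta^t\). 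Measurability of \(t\mapsto\int\psi\,d\bar\eta^t\) is inherited from that of \(t\mapsto\int g\,d\eta^t\), since the two agree for a.e.\ \(t\). This proves the formula for nonnegative \(\psi\). The identity \(\eta^t=(J_t)_\#\bar\eta^t\) is not needed here, because \(g\) factors through \(P\) directly.

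For general \(\psi\) I split \(\psi=\psi^+-\psi^-\). Integrability of \(g\) with respect to \(\eta\) means that \(\int\psi^\pm(x_1,y_1)\,d\eta<\infty\). By the nonnegative case applied to \(\psi^\pm\), both fibrewise integrals \(\int_0^1\int\psi^\pm\,d\bar\eta^t\,dt\) are finite and equal to the corresponding \(\eta\)-integrals. In particular, \(\int\psi^\pm\,d\bar\eta^t<\infty\) for a.e.\ \(t\), so the inner integral \(\int\psi\,d\bar\eta^t\) is defined for a.e.\ \(t\) and is an integrable function of \(t\). Subtracting the two identities then gives the claim.

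For the cost decomposition I use that \(\eta\)-a.e.\ \(x_2=y_2\), by Lemma~\ref{lem:fiberwise-reduction}. Hence \(|x-y|^p=|x_1-y_1|^p\) holds \(\eta\)-a.e., so \(C_p(\eta)=\int\psi(x_1,y_1)\,d\eta\) with \(\psi(a,b)=|a-b|^p\), which is nonnegative and continuous. The nonnegative case applies directly, and finiteness follows from compactness of \(K\).

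The only point requiring care is the a.e.\ bookkeeping. The disintegration and the support properties hold only for a.e.\ \(t\), so one has to check that the exceptional null sets do not affect the outer \(dt\)-integral. This is routine because a countable union of null sets is null.
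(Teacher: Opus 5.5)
Your proposal is correct and follows the paper's own argument. Both write \(\psi(x_1,y_1)=\psi\circ P\), integrate it against the disintegration \(\eta=\int_0^1\eta^t\,dt\) using \(\bar\eta^t=P_\#\eta^t\), and then use \(x_2=y_2\) \(\eta\)-a.e. to obtain the cost formula; your reduction to \(\psi\ge0\) followed by the split into \(\psi^\pm\) simply makes explicit the integrability bookkeeping that the paper leaves implicit.
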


\begin{proof}
Let \((\eta^t)\) and \((\bar\eta^t)\) be the disintegration and fibrewise pushforwards from Lemma~\ref{lem:fiberwise-reduction}. For Borel \(\psi\) as above, we define \(F(x,y):=\psi(x_1,y_1)\). It follows that \(F\in L^1(\eta)\), and disintegration gives
\[
\int_{K\times K}F\,d\eta=\int_0^1\int_{K\times K}F\,d\eta^t\,dt.
\]
We apply \(\bar\eta^t=P_\#\eta^t\), and obtain
\[
\int_{K\times K}\psi(x_1,y_1)\,d\eta
=\int_0^1\int_{\mathbb R\times\mathbb R}\psi(x,y)\,d\bar\eta^t(x,y)\,dt.
\]
If we now take \(\psi(x,y)=|x-y|^p\) and use \(x_2=y_2\), \(\eta\)-a.e., we arrive at
\[
C_p(\eta)=\int|x-y|^p\,d\eta=\int|x_1-y_1|^p\,d\eta
=\int_0^1\int_{\mathbb R\times\mathbb R}|x-y|^p\,d\bar\eta^t\,dt.
\]
\end{proof}

\begin{lemma}[Fibrewise description of \(O(\mu,\nu_n)\) and \(O(\mu,\nu)\)]\label{lem:fiberwise-admissible}
Let us now consider the following cases.
\begin{enumerate}[(i)]
\item If \(n\in\mathbb N\) and \(\eta\in O(\mu,\nu_n)\), then \(\eta=\int_0^1\eta^t\,dt\), where
\[
\eta^t(\{x_2=y_2=t\})=1
\quad\text{for a.e.\ }t\in[0,1],
\]
and the fibre couplings \(\bar\eta^t:=P_\#\eta^t\) satisfy
\[
(\mathrm{pr}_1)_\#\bar\eta^t=\LL^1\!\llcorner[0,1],
\qquad
\bar\eta^t(\{y\le x\})=1
\]
for a.e.\ \(t\in[0,1]\), together with
\[
(\mathrm{pr}_2)_\#\bar\eta^t=
\begin{cases}
\LL^1\!\llcorner[0,1], & t\in A_n,\\
\LL^1\!\llcorner[-1,0], & t\in B_n,
\end{cases}
\qquad\text{for a.e.\ }t\in[0,1].
\]

\item If \(\eta\in O(\mu,\nu)\), then \(\eta=\int_0^1\eta^t\,dt\), where
\[
\eta^t(\{x_2=y_2=t\})=1
\quad\text{for a.e.\ }t\in[0,1],
\]
and the fibre couplings \(\bar\eta^t:=P_\#\eta^t\) satisfy
\[
(\mathrm{pr}_1)_\#\bar\eta^t=\LL^1\!\llcorner[0,1],
\qquad
(\mathrm{pr}_2)_\#\bar\eta^t=\tfrac12\,\LL^1\!\llcorner[-1,1],
\qquad
\bar\eta^t(\{y\le x\})=1
\]
for a.e.\ \(t\in[0,1]\).
\end{enumerate}
\end{lemma}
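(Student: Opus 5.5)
This lemma follows by combining Lemma~\ref{lem:contact-set} with Lemma~\ref{lem:fiberwise-reduction}. The only remaining task is to identify the target fibre marginals via the explicit disintegrations \eqref{eq:fibre-disintegration-n} and \eqref{eq:fibre-disintegration-limit}.

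For (i), let \(\eta\in O(\mu,\nu_n)\). By Lemma~\ref{lem:contact-set}, \(\eta\in\Pi(\mu,\nu_n)\) and \(\eta(\Sigma)=1\). To apply Lemma~\ref{lem:fiberwise-reduction} with \(\widetilde\nu=\nu_n\), we must check \((\pi_2)_\#\nu_n=\LL^1\!\llcorner[0,1]\). This holds because \(T_n\) preserves the second coordinate, so \(\pi_2\circ T_n=\pi_2\) and hence \((\pi_2)_\#\nu_n=(\pi_2)_\#\mu=\LL^1\!\llcorner[0,1]\). The lemma then yields:
\begin{itemize}
\item the disintegration \(\eta=\int_0^1\eta^t\,dt\) with \(\eta^t(\{x_2=y_2=t\})=1\);
\item the first-marginal identity \((\mathrm{pr}_1)_\#\bar\eta^t=\LL^1\!\llcorner[0,1]\);
\item the support condition \(\bar\eta^t(\{y\le x\})=1\);
\item the identity \((\mathrm{pr}_2)_\#\bar\eta^t=(\pi_1)_\#\widetilde\nu^t\) for any fixed disintegration \((\widetilde\nu^t)\) of \(\nu_n\).
\end{itemize}
We choose the disintegration \eqref{eq:fibre-disintegration-n}. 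It is legitimate because \(t\mapsto\nu_n^t\) is measurable (piecewise constant on the Borel sets \(A_n,B_n\)) and \(\nu_n^t(\{y_2=t\})=1\). It also integrates to \(\nu_n\): for a test function \(f\),
\[
\int f\,d\nu_n=\int_{[0,1]^2}f(T_n(x))\,dx=\int_{A_n}\int_0^1 f(x_1,t)\,dx_1\,dt+\int_{B_n}\int_0^1 f(x_1-1,t)\,dx_1\,dt,
\]
and the right-hand side equals \(\int_0^1\int f\,d\nu_n^t\,dt\) after the substitution \(s=x_1-1\) on \(B_n\). Then \((\pi_1)_\#\nu_n^t\) is \(\LL^1\!\llcorner[0,1]\) for \(t\in A_n\) and \(\LL^1\!\llcorner[-1,0]\) for \(t\in B_n\), which gives the claimed second marginals.

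For (ii), the argument is the same with \(\widetilde\nu=\nu\). Since both \(\id\) and \(S\) preserve \(x_2\), we again get \((\pi_2)_\#\nu=\LL^1\!\llcorner[0,1]\). We use the disintegration \eqref{eq:fibre-disintegration-limit}, which is constant in \(t\). Its validity follows from the computation
\[
\int f\,d\nu=\tfrac12\int_0^1\Bigl(\int_0^1 f(x_1,t)\,dx_1+\int_{-1}^0 f(s,t)\,ds\Bigr)dt=\int_0^1\int f\,d\nu^t\,dt.
\]
This gives \((\mathrm{pr}_2)_\#\bar\eta^t=\tfrac12\LL^1\!\llcorner[-1,1]\).

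The only point requiring care is that Lemma~\ref{lem:fiberwise-reduction} identifies the target fibres only up to a \(t\)-null set and for an arbitrary fixed disintegration. It is therefore essential to verify that the specific families \eqref{eq:fibre-disintegration-n} and \eqref{eq:fibre-disintegration-limit} are genuine disintegrations, namely measurable in \(t\), concentrated on \(\{y_2=t\}\), and integrating to the target measure. The pushforward computations above establish exactly this, and the "a.e.\ \(t\)" qualifiers in the lemma absorb the null-set modifications.
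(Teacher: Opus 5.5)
Your proposal is correct and follows the paper's own route: Lemma~\ref{lem:contact-set} gives \(\eta(\Sigma)=1\), and Lemma~\ref{lem:fiberwise-reduction}, applied with \(\widetilde\nu=\nu_n\) or \(\widetilde\nu=\nu\) and the explicit disintegrations \eqref{eq:fibre-disintegration-n} and \eqref{eq:fibre-disintegration-limit}, then yields all the fibre properties. You also check two points the paper leaves implicit: that \((\pi_2)_\#\widetilde\nu=\LL^1\!\llcorner[0,1]\) because \(T_n\) and \(S\) preserve \(x_2\), and that the stated fibre families really are disintegrations; this tightens the argument but does not change it.
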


\begin{proof}
If \(\eta\in O(\mu,\nu_n)\) or \(\eta\in O(\mu,\nu)\), then Lemma~\ref{lem:contact-set} gives \(\eta(\Sigma)=1\). These conclusions follow from the results of Lemma~\ref{lem:fiberwise-reduction}, Corollary~\ref{cor:fiberwise-decomposition}, and the disintegrations \eqref{eq:fibre-disintegration-n} and \eqref{eq:fibre-disintegration-limit}.
\end{proof}

\subsection{One-dimensional minimisers}\label{subsec:prelim-fiber}
\begin{lemma}[Fibrewise order rigidity with equal marginals]\label{lem:fiber-order-rigidity}
Let \(\pi\in\Pcal(\mathbb R\times\mathbb R)\) satisfy \((\mathrm{pr}_1)_\#\pi=(\mathrm{pr}_2)_\#\pi=\LL^1\!\llcorner[0,1]\) and \(\pi(\{y\le x\})=1\).
It then follows that \(\pi=(\id,\id)_\#(\LL^1\!\llcorner[0,1])\). In other terms, \(\pi(\{(x,y):y=x\})=1\).
\end{lemma}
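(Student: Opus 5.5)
The plan is to test \(\pi\) against the function \(f(x,y):=x-y\). This function is nonnegative \(\pi\)-a.e. because \(\pi(\{y\le x\})=1\). Both marginals of \(\pi\) are \(\LL^1\!\llcorner[0,1]\), which has finite first moment, so \(x\) and \(y\) are \(\pi\)-integrable. Hence
\[
\int_{\mathbb R\times\mathbb R}(x-y)\,d\pi(x,y)=\int_{\mathbb R}x\,d(\mathrm{pr}_1)_\#\pi-\int_{\mathbb R}y\,d(\mathrm{pr}_2)_\#\pi=\frac12-\frac12=0 .
\]

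A nonnegative function with zero integral vanishes almost everywhere. So \(x-y=0\) \(\pi\)-a.e., that is, \(\pi(\{(x,y):y=x\})=1\). This is the second formulation of the statement. To recover the first, I would note that \(\pi\) is concentrated on the graph of \(\id\). For any Borel \(E\subset\mathbb R\times\mathbb R\) we then have \(\pi(E)=\pi(E\cap\{y=x\})=\pi(\{(x,y):x\in\{s:(s,s)\in E\}\})\). This equals \((\mathrm{pr}_1)_\#\pi(\{s:(s,s)\in E\})=(\id,\id)_\#(\LL^1\!\llcorner[0,1])(E)\).

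I do not expect a real obstacle here. The only point requiring care is integrability, which justifies splitting \(\int(x-y)\,d\pi\) into two marginal integrals; the bounded support of the marginals settles it. An alternative route tests against \(\mathbf 1_{\{x\le s\}}-\mathbf 1_{\{y\le s\}}\), using equality of the distribution functions together with \(y\le x\). The first-moment argument is shorter and suffices, so I would use that.
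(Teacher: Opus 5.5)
Your proof is correct and follows the paper's argument: the paper also integrates \(x-y\) against \(\pi\), uses the equal marginals to get zero mean, concludes that \(\pi\) is concentrated on the diagonal, and then identifies \(\pi\) from its first marginal. Your integrability remark and your explicit computation of \((\id,\id)_\#(\LL^1\!\llcorner[0,1])(E)\) only spell out details that the paper leaves implicit.
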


\begin{proof}
Since \(\pi(\{y\le x\})=1\), the function \(x-y\) is nonnegative \(\pi\)-a.e. Moreover,
\[
\int_{\mathbb R\times\mathbb R}(x-y)\,d\pi(x,y)
=\int_0^1 x\,dx-\int_0^1 y\,dy
=0.
\]
The above implies that \(x-y=0\) \(\pi\)-a.e., so \(\pi\) is concentrated on the diagonal. Since both marginals are \(\LL^1\!\llcorner[0,1]\), it follows that
\[
\pi=(\id,\id)_\#(\LL^1\!\llcorner[0,1]).
\]
\end{proof}

\begin{lemma}[One-dimensional quadratic minimisers]\label{lem:fiber-quadratic}
Let \(\pi\in\Pcal(\mathbb R\times\mathbb R)\).
\begin{enumerate}[(i)]
\item If \((\mathrm{pr}_1)_\#\pi=\LL^1\!\llcorner[0,1]\) and \((\mathrm{pr}_2)_\#\pi=\LL^1\!\llcorner[-1,0]\), then
\[
\int (x-y)^2\,d\pi\ge 1,
\]
with equality if and only if \(\pi(\{(x,y):y=x-1\})=1\).
\item If \((\mathrm{pr}_1)_\#\pi=\LL^1\!\llcorner[0,1]\) and \((\mathrm{pr}_2)_\#\pi=\tfrac12\,\LL^1\!\llcorner[-1,1]\), then
\[
\int (x-y)^2\,d\pi\ge \frac13,
\]
with equality if and only if \(\pi(\{(x,y):y=2x-1\})=1\).
\end{enumerate}
\end{lemma}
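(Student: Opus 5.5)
Both parts reduce to the same device. Expand the square and use that the marginals are fixed. For any coupling \(\pi\) with the prescribed marginals,
\[
\int (x-y)^2\,d\pi=\int x^2\,d(\mathrm{pr}_1)_\#\pi+\int y^2\,d(\mathrm{pr}_2)_\#\pi-2\int xy\,d\pi .
\]
The first two terms do not depend on \(\pi\), so it suffices to bound \(\int xy\,d\pi\) from above. The plan is to compare with the candidate optimiser by an exact identity, rather than to invoke the general theory of one-dimensional monotone rearrangement. Equality will then force concentration on the graph.

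For (i), the candidate is \(y=x-1\). The marginals give \(\int x\,d\pi=\tfrac12\) and \(\int y\,d\pi=-\tfrac12\). Hence
\[
\int (x-y-1)^2\,d\pi=\int (x-y)^2\,d\pi-2\int (x-y)\,d\pi+1=\int (x-y)^2\,d\pi-1 .
\]
Since the left-hand side is nonnegative, \(\int(x-y)^2\,d\pi\ge1\). Equality holds if and only if \(x-y-1=0\) \(\pi\)-a.e., which is exactly \(\pi(\{y=x-1\})=1\). Conversely, the coupling \((\id,\id-1)_\#\LL^1\!\llcorner[0,1]\) has the right marginals and attains the value \(1\).

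For (ii), I will try the same trick with the candidate \(y=2x-1\). Write \(x-y=(2x-1-y)-(x-1)\). Then
\[
\int (x-y)^2\,d\pi=\int(2x-1-y)^2\,d\pi-2\int(2x-1-y)(x-1)\,d\pi+\int(x-1)^2\,d\pi .
\]
The cross term contains \(\int y x\,d\pi\), which is not determined by the marginals. So the naive identity fails here, and this is the main obstacle.

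To get around it, I will use the rearrangement inequality \(\int xy\,d\pi\le\int_0^1 x(2x-1)\,dx=\tfrac16\). This is the one-dimensional fact that the comonotone coupling maximises \(\int xy\) among couplings with given marginals. I would prove it directly by writing \(xy=\int\!\!\int \mathbf 1_{\{x>s\}}\mathbf 1_{\{y>r\}}\,ds\,dr\) plus linear terms, after shifting to make \(y\) nonnegative. Then
\[
\pi(\{x>s,\,y>r\})\le\min\{\pi(x>s),\,\pi(y>r)\},
\]
and the right-hand side is the value attained by the monotone coupling. This is the Hoeffding-type argument.

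The bound then gives \(\int(x-y)^2\,d\pi\ge \tfrac13+\tfrac13-2\cdot\tfrac16=\tfrac13\). Here \(\int x^2=\tfrac13\), and \(\int y^2\,d\tfrac12\LL^1\!\llcorner[-1,1]=\tfrac13\).

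For the equality case, equality forces \(\pi(x>s,\,y>r)=\min\{\cdots\}\) for a.e. \((s,r)\), and by right-continuity for all \((s,r)\). This characterises the comonotone coupling: the support of \(\pi\) is a monotone set. Since the first marginal is atomless, that coupling is the graph of the monotone map \(F_\nu^{-1}\circ F_\mu(x)=2x-1\). Thus \(\pi(\{y=2x-1\})=1\). Alternatively, one may cite uniqueness of the quadratic optimal plan with an absolutely continuous source, by Brenier in one dimension.
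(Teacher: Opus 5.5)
Your proof is correct. Part (i) is the paper's argument in another guise: your identity $\int(x-y-1)^2\,d\pi=\int(x-y)^2\,d\pi-1$ is the variance form of the paper's Jensen step, with the same equality case.

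In part (ii) you share the paper's reduction to maximising $\int xy\,d\pi$, but the key step differs. The paper cites the one-dimensional monotone rearrangement theorem (optimality and uniqueness of the monotone plan for strictly convex costs with an atomless source). You instead prove the needed special case via the Fr\'echet--Hoeffding bound $\pi(\{x>s,\,y>r\})\le\min\{\pi(x>s),\pi(y>r)\}$ and the layer-cake representation of $xy$. This makes the lemma self-contained, but the equality analysis is more delicate: a.e.\ equality of survival functions, upgraded to everywhere by right-continuity, then identification of $\pi$ from its joint survival function and the fixed marginals. You handle this correctly.

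The obstacle you flag is an artefact of your particular split, though. Weight the square by $\tfrac12$, so that its $xy$-coefficient matches that of $(x-y)^2$. This gives the pointwise identity
\[
(x-y)^2=\tfrac12(2x-1-y)^2+(2x-x^2)+\bigl(\tfrac12y^2-y-\tfrac12\bigr).
\]
The last two terms integrate to $\tfrac23$ and $-\tfrac13$ against the prescribed marginals, so for every admissible $\pi$,
\[
\int(x-y)^2\,d\pi=\tfrac13+\tfrac12\int(2x-1-y)^2\,d\pi .
\]
This yields both the bound and the equality case of (ii) in one line, exactly as in (i). The two extra terms are explicit Kantorovich potentials certifying optimality of $y=2x-1$. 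This route is more elementary than either the citation or the Hoeffding argument.
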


\begin{proof}
\begin{enumerate}[(i)]
\item Since
\[
\int_{\mathbb R\times\mathbb R}(x-y)\,d\pi(x,y)
=\int_0^1 x\,dx-\int_{-1}^0 y\,dy
=1,
\]
By Jensen's inequality, we obtain
\[
\int (x-y)^2\,d\pi \ge \left(\int (x-y)\,d\pi\right)^2 = 1.
\]
Equality holds if and only if \(x-y\) is \(\pi\)-a.e.\ constant, that is equal to \(1\). In other words, this holds if and only if \(y=x-1\) \(\pi\)-a.e.

\item Note that the marginals are fixed, so
\[
\int (x-y)^2\,d\pi
\;=\;
\int_0^1 x^2\,dx
+
\int_{-1}^1 \frac12\,y^2\,dy
-
2\int xy\,d\pi.
\]
This implies that minimising \(\int (x-y)^2\,d\pi\) is equivalent to maximising \(\int xy\,d\pi\). By the classical one-dimensional monotone rearrangement theorem for strictly convex costs, the unique minimiser is induced by the monotone map \(T=F_Y^{[-1]}\circ F_X\) (see \cite[Theorems~2.5 and~2.9]{Santambrogio2015}), where \(F_X\) and \(F_Y\) denote the distribution functions of the first and second marginals and
\[
F_Y^{[-1]}(s):=\inf\{t\in\mathbb R:\ F_Y(t)\ge s\}
\]
is the monotone generalised inverse. Here, \(F_X(x)=x\) on \([0,1]\) and \(F_Y^{[-1]}(s)=2s-1\) on \([0,1]\), so \(T(x)=2x-1\). It follows that the unique minimiser is
\[
\left(\id,2\id-1\right)_\#(\LL^1\!\llcorner[0,1]),
\]
and the minimum equals
\[
\int_0^1 (x-(2x-1))^2\,dx=\frac13.
\]
\end{enumerate}
\end{proof}

\begin{lemma}[Fibrewise strictly convex minimum for shifted marginals]\label{lem:fiber-strict-convex}
Let \(\Phi:[0,2]\to\mathbb R\) be strictly convex, and let \(\pi\in\Pcal(\mathbb R\times\mathbb R)\) satisfy \((\mathrm{pr}_1)_\#\pi=\LL^1\!\llcorner[0,1]\) and \((\mathrm{pr}_2)_\#\pi=\LL^1\!\llcorner[-1,0]\).
It follows that
\[
\int \Phi(|x-y|)\,d\pi \ge \Phi(1),
\]
with equality if and only if \(\pi(\{(x,y):y=x-1\})=1\).
\end{lemma}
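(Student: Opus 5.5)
The argument mirrors Lemma~\ref{lem:fiber-quadratic}(i), with Jensen's inequality for the strictly convex function \(\Phi\) replacing the quadratic case. First we locate the support. Since \(\pi\) has first marginal \(\LL^1\!\llcorner[0,1]\) and second marginal \(\LL^1\!\llcorner[-1,0]\), we have \(x\in[0,1]\) and \(y\in[-1,0]\) for \(\pi\)-a.e.\ \((x,y)\). Hence \(x-y\in[0,2]\) \(\pi\)-a.e., so \(|x-y|=x-y\) \(\pi\)-a.e. and \(\Phi(|x-y|)\) is well defined. Moreover, \(\Phi\) is convex on a compact interval, hence bounded below by an affine function. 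Therefore the integral is well defined in \((-\infty,+\infty]\). It is in fact finite if \(\Phi\) is continuous; otherwise we only use \(\int\Phi\ge\) affine, which suffices.

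Next we compute the mean exactly as in Lemma~\ref{lem:fiber-quadratic}(i):
\[
\int (x-y)\,d\pi=\int_0^1x\,dx-\int_{-1}^0y\,dy=\tfrac12+\tfrac12=1 .
\]
Applying Jensen's inequality to the law \(\rho:=(x-y)_\#\pi\), a probability measure on \([0,2]\) with barycentre \(1\), gives
\[
\int\Phi(|x-y|)\,d\pi=\int_{[0,2]}\Phi\,d\rho\ge\Phi(1).
\]

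It remains to characterise equality. If \(y=x-1\) \(\pi\)-a.e., then \(\rho=\delta_1\) and equality is immediate. Conversely, suppose equality holds. Since \(1\) lies in the interior \((0,2)\), there is a supporting line \(\ell(s)=\Phi(1)+m(s-1)\) with \(\Phi\ge\ell\) on \([0,2]\), and strict convexity gives \(\Phi(s)>\ell(s)\) for every \(s\neq1\). Then
\[
0=\int(\Phi-\ell)\,d\rho,
\]
and the integrand is nonnegative, so \(\Phi=\ell\) \(\rho\)-a.e. This forces \(\rho=\delta_1\), that is, \(y=x-1\) \(\pi\)-a.e.

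The only delicate point is this strict-inequality step. It needs the existence of a subgradient at the interior point \(1\), which holds for any convex function on an interval, and the observation that a strictly convex function touches a supporting line at one point only. Neither step requires continuity of \(\Phi\) at the endpoints.
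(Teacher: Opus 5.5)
Your proof is correct and follows the paper's route. Both locate the supports so that $|x-y|=x-y$, compute $\int(x-y)\,d\pi=1$, apply Jensen, and invoke strict convexity for the equality case, which your supporting-line argument at the interior point $1$ simply spells out where the paper asserts it in one line (incidentally, a convex $\Phi$ on $[0,2]$ is bounded above by $\max\{\Phi(0),\Phi(2)\}$ and below by that line, so the integral is always finite even without continuity).
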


\begin{proof}
Since \(x\in[0,1]\) and \(y\in[-1,0]\) \(\pi\)-a.e., \(|x-y|=x-y\) \(\pi\)-a.e. follows. Moreover,
\[
\int (x-y)\,d\pi(x,y)
=\int_0^1 x\,dx-\int_{-1}^0 y\,dy
=1.
\]
By Jensen's inequality,
\[
\int \Phi(|x-y|)\,d\pi
=
\int \Phi(x-y)\,d\pi
\ge
\Phi\!\left(\int (x-y)\,d\pi\right)
=
\Phi(1).
\]
Given that \(\Phi\) is strictly convex, equality holds if and only if \(x-y=1\) \(\pi\)-a.e., that is, if and only if \(y=x-1\) \(\pi\)-a.e.
\end{proof}

\section{Counterexample to narrow stability of the ray-monotone selector}\label{sec:counterexample}

We note that by Lemma~\ref{lem:contact-set}, every plan in \(O(\mu,\nu_n)\cup O(\mu,\nu)\) is supported on the common contact set \(\Sigma\), so the analysis in this section can be reduced to one-dimensional fibres.

\subsection{Convergence of the oscillatory targets and plans}
\begin{lemma}[Convergence of \(\nu_n\) and \(\gamma_n\)]\label{lem:conv-nu-gamma}
With \(\nu_n,\nu,\gamma_n,\gamma^{\mathrm{hom}}\) as in Definition~\ref{def:intro-osc}, we have
\[
\nu_n\rightharpoonup \nu \quad\text{in }\Pcal(K),
\qquad
\gamma_n\rightharpoonup \gamma^{\mathrm{hom}} \quad\text{in }\Pcal(K\times K).
\]
\end{lemma}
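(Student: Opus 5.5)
The plan is to test against a continuous function and reduce to a Riemann–Lebesgue-type averaging statement for the indicator $\mathbf 1_{A_n}$. Since $K$ and $K\times K$ are compact, it suffices to show, for every $\varphi\in C(K\times K)$, that $\int\varphi\,d\gamma_n\to\int\varphi\,d\gamma^{\mathrm{hom}}$. The convergence $\nu_n\rightharpoonup\nu$ then follows by taking $\varphi(x,y)=\psi(y)$ with $\psi\in C(K)$, because $\nu_n=(\mathrm{pr}_2)_\#\gamma_n$ and $\nu=(\mathrm{pr}_2)_\#\gamma^{\mathrm{hom}}$.

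\textbf{Step 1: split the integral by strip.} By definition of $T_n$,
\[
\int\varphi\,d\gamma_n=\int_0^1\Bigl(\mathbf 1_{A_n}(x_2)\,f(x_2)+\mathbf 1_{B_n}(x_2)\,g(x_2)\Bigr)\,dx_2,
\]
where $f(t):=\int_0^1\varphi((x_1,t),(x_1,t))\,dx_1$ and $g(t):=\int_0^1\varphi((x_1,t),(x_1-1,t))\,dx_1$. Both $f$ and $g$ are continuous on $[0,1]$ by uniform continuity of $\varphi$. Since $\mathbf 1_{B_n}=1-\mathbf 1_{A_n}$, the target value is
\[
\int\varphi\,d\gamma^{\mathrm{hom}}=\tfrac12\int_0^1 (f+g)\,dt .
\]
It is therefore enough to prove
\[
\int_0^1\mathbf 1_{A_n}h\,dt\to\tfrac12\int_0^1 h\,dt\quad\text{for every } h\in C([0,1]).
\]

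\textbf{Step 2: the averaging lemma.} This is the main, though elementary, step. On each cell $I_k=[k/n,(k+1)/n)$, the set $A_n$ occupies exactly the left half, of length $1/(2n)$. Hence
\[
\Bigl|\int_{I_k}\mathbf 1_{A_n}h-\tfrac12\int_{I_k}h\Bigr|\le \tfrac1n\,\omega_h(1/n),
\]
where $\omega_h$ is the modulus of continuity of $h$. To see this, compare both integrals with $h(k/n)$ times the respective lengths, which agree. Summing over the $n$ cells gives a total error of at most $\omega_h(1/n)\to0$.

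If one prefers to avoid the reduction to $f$ and $g$, the same estimate can be run directly with $\varphi$. Alternatively, one could invoke the averaging lemma from the Appendix, but the direct estimate above is self-contained. No real obstacle is expected; the only care needed is verifying the continuity of $f$ and $g$, which follows from uniform continuity of $\varphi$ on the compact set $K\times K$.
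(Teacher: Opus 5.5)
Your proof is correct and follows essentially the same route as the paper: split the integral over the strips \(A_n\) and \(B_n\), then reduce to the weak-\(*\) convergence \(\mathbf 1_{A_n}\rightharpoonup\tfrac12\) (the paper's appendix lemma). Your two streamlinings are both valid. Deducing \(\nu_n\rightharpoonup\nu\) by pushing forward under \(\mathrm{pr}_2\) avoids the paper's separate computation, and observing that \(f,g\) are continuous lets the cell-by-cell estimate do the averaging step without the \(L^1\) density argument.
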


\begin{proof}
Let \(\varphi\in C(K)\). We apply \(\nu_n=(T_n)_\#\mu\) and Fubini's theorem,
\[
\int_K \varphi(y)\,d\nu_n(y)
=\int_{[0,1]^2}\varphi(T_n(x_1,x_2))\,dx_1dx_2
=\int_{A_n} f_1(t)\,dt + \int_{B_n} f_2(t)\,dt,
\]
where
\[
f_1(t):=\int_0^1 \varphi(x_1,t)\,dx_1,
\qquad
f_2(t):=\int_0^1 \varphi(x_1-1,t)\,dx_1.
\]
Since \(f_1,f_2\in L^1([0,1])\), Lemma~\ref{lem:weakstar-An} yields
\[
\int_K \varphi\,d\nu_n \to \frac12\int_0^1 f_1(t)\,dt+\frac12\int_0^1 f_2(t)\,dt = \int_K \varphi\,d\nu,
\]
which proves \(\nu_n\rightharpoonup \nu\).

Analogously, let \(\Psi\in C(K\times K)\). We apply \(\gamma_n=(\id\times T_n)_\#\mu\),
\[
\int_{K\times K} \Psi(x,y)\,d\gamma_n(x,y)
=\int_{[0,1]^2}\Psi\bigl((x_1,x_2),T_n(x_1,x_2)\bigr)\,dx_1dx_2
=\int_{A_n} g_1(t)\,dt+\int_{B_n} g_2(t)\,dt,
\]
where
\[
g_1(t):=\int_0^1 \Psi((x_1,t),(x_1,t))\,dx_1,
\qquad
g_2(t):=\int_0^1 \Psi((x_1,t),(x_1-1,t))\,dx_1.
\]
Again \(g_1,g_2\in L^1([0,1])\), so Lemma~\ref{lem:weakstar-An} gives
\[
\int \Psi\,d\gamma_n \to \frac12\int \Psi(x,x)\,d\mu(x)+\frac12\int \Psi(x,Sx)\,d\mu(x) = \int \Psi\,d\gamma^{\mathrm{hom}},
\]
that is,\ \(\gamma_n\rightharpoonup\gamma^{\mathrm{hom}}\).
\end{proof}

\subsection{The oscillatory problems}

\begin{proposition}[Quadratic minimiser for the oscillatory targets]\label{prop:sec-n}
For each \(n\), the plan \(\gamma_n\) is the unique minimiser of \(C_2\) over \(O(\mu,\nu_n)\).
\end{proposition}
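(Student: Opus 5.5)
The proof reduces everything to the fibres via Lemma~\ref{lem:fiberwise-admissible}(i) and Corollary~\ref{cor:fiberwise-decomposition}.

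First, \(\gamma_n\in O(\mu,\nu_n)\). Indeed \(\gamma_n\in\Pi(\mu,\nu_n)\) by construction. It is supported on \(\Sigma\) because \(T_n\) preserves \(x_2\) and satisfies \((T_n(x))_1\le x_1\). Lemma~\ref{lem:contact-set} then gives optimality. Its cost is
\[
C_2(\gamma_n)=\int_{B_n}1\,dt=\tfrac12 .
\]

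Next, take any \(\eta\in O(\mu,\nu_n)\) with fibre couplings \(\bar\eta^t\) as in Lemma~\ref{lem:fiberwise-admissible}(i). By Corollary~\ref{cor:fiberwise-decomposition} with \(p=2\),
\[
C_2(\eta)=\int_{A_n}\!\int (x-y)^2\,d\bar\eta^t\,dt+\int_{B_n}\!\int (x-y)^2\,d\bar\eta^t\,dt .
\]
The two kinds of fibres are handled separately.

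For a.e.\ \(t\in A_n\), both marginals of \(\bar\eta^t\) equal \(\LL^1\!\llcorner[0,1]\) and \(\bar\eta^t(\{y\le x\})=1\). Lemma~\ref{lem:fiber-order-rigidity} therefore forces \(\bar\eta^t\) to be the diagonal coupling, whose inner integral is \(0\). So on these fibres there is no freedom at all.

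For a.e.\ \(t\in B_n\), Lemma~\ref{lem:fiber-quadratic}(i) gives an inner integral of at least \(1\). Equality holds iff \(\bar\eta^t\) is concentrated on \(\{y=x-1\}\), i.e.\ iff \(\bar\eta^t=(\id,\id-1)_\#\LL^1\!\llcorner[0,1]\).

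Combining the two cases gives \(C_2(\eta)\ge\LL^1(B_n)=\frac12=C_2(\gamma_n)\). Hence \(\gamma_n\) is a minimiser.

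For uniqueness, suppose \(C_2(\eta)=\frac12\). Since the \(A_n\)-fibres are already rigid, the \(B_n\)-fibre inequality must be an equality for a.e.\ \(t\in B_n\). This pins down \(\bar\eta^t\) for a.e.\ \(t\in[0,1]\). The disintegration of \(\gamma_n\) along \(x_2\) has exactly the same fibres, namely the diagonal on \(A_n\) and the unit shift on \(B_n\). By the reconstruction \(\eta^t=(J_t)_\#\bar\eta^t\) in Lemma~\ref{lem:fiberwise-reduction}, the family \((\bar\eta^t)\) determines \(\eta\), so \(\eta=\gamma_n\).

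The main point to check carefully is this last step. We must compute \(\gamma_n\)'s own fibre couplings and match them to the forced ones on a full-measure set of \(t\). This follows from the uniqueness of disintegrations used in Lemma~\ref{lem:fiberwise-reduction}, together with the explicit form \(\gamma_n^t=(J_t)_\#(\id,\id)_\#\LL^1\!\llcorner[0,1]\) for \(t\in A_n\) and \(\gamma_n^t=(J_t)_\#(\id,\id-1)_\#\LL^1\!\llcorner[0,1]\) for \(t\in B_n\), which one verifies by testing against product functions.
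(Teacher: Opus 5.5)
Your proof is correct and follows the paper's argument. Both reduce to fibres via Lemma~\ref{lem:fiberwise-admissible}(i), use order rigidity (Lemma~\ref{lem:fiber-order-rigidity}) on the \(A_n\)-fibres and the Jensen bound of Lemma~\ref{lem:fiber-quadratic}(i) on the \(B_n\)-fibres, and get uniqueness from the reconstruction formula of Lemma~\ref{lem:fiberwise-reduction}; you also spell out two details the paper leaves implicit, namely \(\gamma_n\in O(\mu,\nu_n)\) and that \(\gamma_n\)'s own fibres coincide with the forced ones.
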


\begin{proof}
Let us fix \(n\) and \(\eta\in O(\mu,\nu_n)\). By Lemma~\ref{lem:fiberwise-admissible}(i), the fibre couplings \(\bar\eta^t\) satisfy
\[
(\mathrm{pr}_1)_\#\bar\eta^t=\LL^1\!\llcorner[0,1],\qquad
\bar\eta^t(\{y\le x\})=1
\]
for a.e.\ \(t\in[0,1]\), with
\[
(\mathrm{pr}_2)_\#\bar\eta^t=
\begin{cases}
\LL^1\!\llcorner[0,1], & t\in A_n,\\
\LL^1\!\llcorner[-1,0], & t\in B_n.
\end{cases}
\]
By Lemma~\ref{lem:fiber-order-rigidity}, we obtain
\[
\bar\eta^t=(\id,\id)_\#(\LL^1\!\llcorner[0,1])
\quad\text{for a.e.\ }t\in A_n,
\]
while Lemma~\ref{lem:fiber-quadratic}(i) gives
\[
\int (x-y)^2\,d\bar\eta^t(x,y)\ge 1
\quad\text{for a.e.\ }t\in B_n,
\]
with equality if and only if
\[
\bar\eta^t=(\id,\id-1)_\#(\LL^1\!\llcorner[0,1]).
\]
Corollary~\ref{cor:fiberwise-decomposition} gives
\[
C_2(\eta)\ge \int_{B_n}1\,dt=\frac12.
\]
Since \(C_2(\gamma_n)=\frac12\), the plan \(\gamma_n\) is minimal. If equality holds, then the equality cases above hold on almost every fibre, and the reconstruction formula in Lemma~\ref{lem:fiberwise-reduction} yields \(\eta=\gamma_n\). We conclude that \(\gamma_n\) is the unique minimiser.
\end{proof}

\subsection{The limit problem}
\begin{proposition}[Quadratic minimiser for the homogenised target]\label{prop:sec-limit}
The functional \(C_2\) has a unique minimiser on \(O(\mu,\nu)\), namely
\[
\gamma^{\mathrm{sel}}(\mu,\nu)=(\id\times T)_\#\mu,
\qquad
T(x_1,x_2):=(2x_1-1,x_2).
\]
In particular, \(C_2\bigl(\gamma^{\mathrm{sel}}(\mu,\nu)\bigr)=\frac13\).
\end{proposition}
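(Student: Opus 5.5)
The proof mirrors that of Proposition~\ref{prop:sec-n}, now using the fibre decomposition for the limit target. Fix \(\eta\in O(\mu,\nu)\). By Lemma~\ref{lem:fiberwise-admissible}(ii), \(\eta=\int_0^1\eta^t\,dt\), where for a.e.\ \(t\) the fibre coupling \(\bar\eta^t=P_\#\eta^t\) has first marginal \(\LL^1\!\llcorner[0,1]\) and second marginal \(\tfrac12\LL^1\!\llcorner[-1,1]\). Lemma~\ref{lem:fiber-quadratic}(ii) then gives, for a.e.\ \(t\),
\[
\int (x-y)^2\,d\bar\eta^t(x,y)\ge\frac13,
\]
with equality if and only if \(\bar\eta^t=(\id,2\id-1)_\#(\LL^1\!\llcorner[0,1])\). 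Integrating in \(t\) via Corollary~\ref{cor:fiberwise-decomposition} yields \(C_2(\eta)\ge \frac13\).

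Next we check that the candidate \(\gamma^\ast:=(\id\times T)_\#\mu\) is admissible and attains this bound. Its first marginal is \(\mu\). Its second marginal is \(T_\#\mu\), and since \(x_1\mapsto 2x_1-1\) pushes \(\LL^1\!\llcorner[0,1]\) to \(\tfrac12\LL^1\!\llcorner[-1,1]\) while \(x_2\) is unchanged, we get \(T_\#\mu=\tfrac12\LL^2\!\llcorner([-1,1]\times[0,1])\). This equals \(\nu\), because \(\frac12\mu+\frac12S_\#\mu\) is exactly this measure. Moreover \(2x_1-1\le x_1\) for \(x_1\in[0,1]\) and \(x_2=y_2\), so \(\gamma^\ast(\Sigma)=1\). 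Hence \(\gamma^\ast\in O(\mu,\nu)\) by Lemma~\ref{lem:contact-set}. Finally,
\[
C_2(\gamma^\ast)=\int_0^1(1-x_1)^2\,dx_1=\frac13,
\]
so \(\gamma^\ast\) is a minimiser.

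For uniqueness, let \(\eta\in O(\mu,\nu)\) with \(C_2(\eta)=\frac13\). Since the fibrewise inequality holds pointwise a.e.\ and the integrals agree, equality must hold for a.e.\ \(t\). Thus \(\bar\eta^t=(\id,2\id-1)_\#(\LL^1\!\llcorner[0,1])\) for a.e.\ \(t\). By the reconstruction \(\eta^t=(J_t)_\#\bar\eta^t\) of Lemma~\ref{lem:fiberwise-reduction}, \(\eta\) is determined by its fibres. The plan \(\gamma^\ast\) has the same fibres (disintegrate it in \(x_2\) directly), so \(\eta=\gamma^\ast\).

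The only delicate point is this identification step: one must check that \(\gamma^\ast\)'s disintegration has the stated fibres and that the integrated inequality forces fibrewise equality on a full-measure set. Both are routine, since \(\int(\text{fibre cost}-\tfrac13)\,dt=0\) with a nonnegative integrand.
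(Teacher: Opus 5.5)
Your proposal is correct and follows the paper's proof essentially step for step. Both arguments take the same route: the fibrewise lower bound \(\frac13\) from Lemma~\ref{lem:fiber-quadratic}(ii), integrated via Corollary~\ref{cor:fiberwise-decomposition}; admissibility and cost of \((\id\times T)_\#\mu\) through the contact set \(\Sigma\); and uniqueness from a.e.\ fibrewise equality together with the reconstruction formula of Lemma~\ref{lem:fiberwise-reduction}. The only difference is that you write out the check \(T_\#\mu=\nu\), which the paper asserts without computation.
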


\begin{proof}
Let \(\eta\in O(\mu,\nu)\). By Lemma~\ref{lem:fiberwise-admissible}(ii), the fibre couplings \(\bar\eta^t\) have first marginal \(\LL^1\!\llcorner[0,1]\), second marginal \(\tfrac12\,\LL^1\!\llcorner[-1,1]\), and are supported on \(\{y\le x\}\) for a.e.\ \(t\in[0,1]\). Lemma~\ref{lem:fiber-quadratic}(ii) gives
\[
\int (x-y)^2\,d\bar\eta^t(x,y)\ge \frac13
\quad\text{for a.e.\ }t,
\]
with equality if and only if
\[
\bar\eta^t=(\id,2\id-1)_\#(\LL^1\!\llcorner[0,1]).
\]
By Corollary~\ref{cor:fiberwise-decomposition}, we have
\[
C_2(\eta)\ge \frac13.
\]

We set \(T(x_1,x_2):=(2x_1-1,x_2)\). Since \(T_\#\mu=\nu\) and \((\id\times T)_\#\mu\) is supported on \(\Sigma\), Lemma~\ref{lem:contact-set} gives \((\id\times T)_\#\mu\in O(\mu,\nu)\). Moreover,
\[
C_2\bigl((\id\times T)_\#\mu\bigr)
=\int_{[0,1]^2}|(x_1,x_2)-(2x_1-1,x_2)|^2\,dx_1dx_2
=\int_0^1(1-x_1)^2\,dx_1
=\frac13.
\]
Thus \((\id\times T)_\#\mu\) is a minimiser. If equality holds for \(\eta\), then the equality case in Lemma~\ref{lem:fiber-quadratic}(ii) holds on almost every fibre, and the reconstruction formula in Lemma~\ref{lem:fiberwise-reduction} yields
\[
\eta=(\id\times T)_\#\mu.
\]
This implies that the minimiser is unique, that is,
\[
\gamma^{\mathrm{sel}}(\mu,\nu)=(\id\times T)_\#\mu.
\]
Since \(T(x)=x\) if and only if \(x_1=1\), we also have
\[
\gamma^{\mathrm{sel}}(\mu,\nu)(\Delta)=\mu(\{x_1=1\})=0.
\]
\end{proof}

\subsection{Proof of the instability theorem}
\begin{proof}[Proof of Theorem~\ref{thm:main}]
Lemma~\ref{lem:conv-nu-gamma} gives \(\nu_n\rightharpoonup\nu\) and \(\gamma_n\rightharpoonup\gamma^{\mathrm{hom}}\). By Proposition~\ref{prop:sec-n}, \(\gamma_n=\gamma^{\mathrm{sel}}(\mu,\nu_n)\) holds for every \(n\). Proposition~\ref{prop:sec-limit} yields
\[
\gamma^{\mathrm{sel}}(\mu,\nu)=(\id\times T)_\#\mu,
\qquad
T(x_1,x_2)=(2x_1-1,x_2),
\]
together with
\[
C_2\bigl(\gamma^{\mathrm{sel}}(\mu,\nu)\bigr)=\frac13,
\qquad
\gamma^{\mathrm{sel}}(\mu,\nu)(\Delta)=0.
\]

We shall now identify the limit plan. By construction,
\[
(\mathrm{pr}_1)_\#\gamma^{\mathrm{hom}}=\mu,
\qquad
(\mathrm{pr}_2)_\#\gamma^{\mathrm{hom}}=\nu,
\]
and both \((\id,\id)_\#\mu\) and \((\id,S)_\#\mu\) are supported on \(\Sigma\). It follows that \(\gamma^{\mathrm{hom}}\in O(\mu,\nu)\) by Lemma~\ref{lem:contact-set}. Moreover,
\[
\gamma^{\mathrm{hom}}(\Delta)
=
\frac12(\id,\id)_\#\mu(\Delta)+\frac12(\id,S)_\#\mu(\Delta)
=
\frac12,
\]
because \(S\) has no fixed point on \([0,1]^2\). Finally,
\[
C_2(\gamma^{\mathrm{hom}})
=
\frac12 C_2\bigl((\id,\id)_\#\mu\bigr)
+
\frac12 C_2\bigl((\id,S)_\#\mu\bigr)
=
\frac12.
\]
We conclude that \(\gamma^{\mathrm{hom}}\neq\gamma^{\mathrm{sel}}(\mu,\nu)\), which establishes the claim.
\end{proof}

\subsection{Kuratowski limit of the optimal-plan sets}\label{sec:stable}

We now identify the exact class of plans in \(O(\mu,\nu)\) that arise as narrow limits of plans in \(O(\mu,\nu_n)\).

\begin{definition}[Kuratowski upper and lower limits]\label{def:stable-closure}
For subsets \(E_n\subset\Pcal(K\times K)\), let us define
\[
\limsup_{n\to\infty}E_n
:=\Bigl\{\eta:\exists\,n_k\uparrow\infty,\ \eta_{n_k}\in E_{n_k},\ \eta_{n_k}\rightharpoonup\eta\Bigr\},
\]
\[
\liminf_{n\to\infty}E_n
:=\Bigl\{\eta:\forall\ \text{open neighbourhoods }U\ni\eta\text{ in the narrow topology},\ \exists N\ \text{s.t.}\ E_n\cap U\neq\varnothing\ \forall n\ge N\Bigr\}.
\]
If these two sets coincide, we denote their common value by \(\lim_{n\to\infty}E_n\).
\end{definition}

\begin{lemma}[Equivalent characterisation of \(\Aatt\)]\label{lem:Delta-equiv}
For \(\eta\in O(\mu,\nu)\), the following are equivalent:
\begin{enumerate}[(i)]
\item \(\eta(\Delta)=\frac12\)
\item \(\eta\llcorner\{y_1\ge0\}=\frac12(\id,\id)_\#\mu\).
\end{enumerate}
\end{lemma}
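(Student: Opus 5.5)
I would work fibrewise. Fix \(\eta\in O(\mu,\nu)\) and, by Lemma~\ref{lem:fiberwise-admissible}(ii), write \(\eta=\int_0^1\eta^t\,dt\) with fibre couplings \(\bar\eta^t=P_\#\eta^t\). These have first marginal \(\LL^1\!\llcorner[0,1]\) and second marginal \(\tfrac12\LL^1\!\llcorner[-1,1]\), and they are concentrated on \(\{y\le x\}\). Since \(x_2=y_2\) holds \(\eta\)-a.e., we have \(\eta(\Delta)=\eta(\{x_1=y_1\})=\int_0^1\bar\eta^t(\{y=x\})\,dt\) by Corollary~\ref{cor:fiberwise-decomposition} with \(\psi=\mathbf 1_{\{x=y\}}\). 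Likewise, \(\eta\llcorner\{y_1\ge0\}\) is reconstructed from \(\bar\eta^t\llcorner\{y\ge0\}\) through the maps \(J_t\).

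The key step is a one-dimensional inequality. For any coupling \(\pi\) with these fibre marginals and \(\pi(\{y\le x\})=1\), I claim
\[
\pi(\{y=x\})\le\pi(\{y\ge0\})=\tfrac12,
\]
with equality if and only if \(\pi\llcorner\{y\ge0\}=\tfrac12(\id,\id)_\#(\LL^1\!\llcorner[0,1])\). The inequality holds because \(\{y=x\}\subset\{y\ge0\}\) up to the \(\pi\)-null set where \(x\notin[0,1]\); the value \(\tfrac12\) is the second marginal mass of \([0,1]\). For the equality case, suppose \(\pi(\{y=x\})=\tfrac12\). Then \(\pi\llcorner\{y\ge0\}\) is concentrated on the diagonal, and its second marginal is \(\tfrac12\LL^1\!\llcorner[0,1]\). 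A measure concentrated on the diagonal has equal marginals, so \(\pi\llcorner\{y\ge0\}=\tfrac12(\id,\id)_\#\LL^1\!\llcorner[0,1]\). Conversely, if \(\pi\llcorner\{y\ge0\}\) equals this measure, then \(\pi(\{y=x\})\ge\tfrac12\), and the upper bound forces equality.

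With this in hand, the proof of (i)\(\Rightarrow\)(ii) goes as follows. Integrating the fibre inequality gives \(\eta(\Delta)\le\tfrac12\), and equality forces \(\bar\eta^t(\{y=x\})=\tfrac12\) for a.e.\ \(t\). The equality case then identifies \(\bar\eta^t\llcorner\{y\ge0\}\) for a.e.\ \(t\). Pushing forward by \(J_t\) and integrating in \(t\), I expect to get \(\eta\llcorner\{y_1\ge0\}=\int_0^1\tfrac12 (J_t)_\#(\id,\id)_\#\LL^1\!\llcorner[0,1]\,dt=\tfrac12(\id,\id)_\#\mu\).

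For (ii)\(\Rightarrow\)(i), there is a shortcut that avoids fibres entirely. Since \((\id,\id)_\#\mu\) is concentrated on \(\Delta\), we get \(\eta(\Delta)\ge\eta(\Delta\cap\{y_1\ge0\})=\tfrac12\). In the other direction, \(\eta(\Delta)\le\eta(\{x\in[0,1]^2\})\cap\{y_1\ge 0\}\) up to null sets, and this is at most \(\nu(\{y_1\ge0\})=\tfrac12\).

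The main obstacle is purely technical bookkeeping. One must check that restriction to \(\{y_1\ge0\}\) commutes with the disintegration and the reconstruction \(\eta^t=(J_t)_\#\bar\eta^t\); this holds because \(\{y_1\ge0\}=P^{-1}(\{y\ge0\})\). One must also check that the equality-a.e.-in-\(t\) argument is measurable, which follows from the measurability of \(t\mapsto\bar\eta^t(B)\).
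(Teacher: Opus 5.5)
Your proof is correct. For (ii)\(\Rightarrow\)(i) it coincides with the paper's argument, but for (i)\(\Rightarrow\)(ii) you route through the fibre disintegration, which the paper does not need.

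The paper runs your one-dimensional argument directly on \(K\times K\). Since \(\eta(\Delta\cap\{y_1<0\})\le\eta(\{x_1<0\})=\mu(\{x_1<0\})=0\), one gets \(\eta(\Delta\cap\{y_1\ge0\})=\eta(\Delta)=\tfrac12=\nu(\{y_1\ge0\})=\eta(\{y_1\ge0\})\). Hence \(\eta\llcorner\{y_1\ge0\}\) is concentrated on \(\Delta\), so it equals \((\id,\id)_\#\alpha\), where \(\alpha\) is its second marginal \(\nu\llcorner\{y_1\ge0\}=\tfrac12\mu\).

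Your fibre inequality \(\pi(\{y=x\})\le\pi(\{y\ge0\})\) and its equality case are exactly this global argument applied on each fibre; note that you never use \(\pi(\{y\le x\})=1\). So the disintegration buys only a fibrewise statement: \(\bar\eta^t\llcorner\{y\ge0\}\) is half the diagonal coupling for a.e.\ \(t\). The price is the bookkeeping you flag yourself: a.e.\ equality from the integrated bound, commuting the restriction with \(J_t\), and measurability.

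The global route is shorter. It also shows the equivalence holds for every \(\eta\in\Pi(\mu,\nu)\), since it uses only the marginals. Your route needs \(\eta(\Sigma)=1\), both for the fibre decomposition and for \(x_2=y_2\), which you use to write \(\eta(\Delta)\) as a fibrewise diagonal mass.

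A minor point: in your upper bound for \(\eta(\Delta)\) in the converse, the parenthesis is misplaced. You mean \(\eta(\Delta)\le\eta(\{y_1\ge0\})\), because \(\Delta\cap\{x\in[0,1]^2\}\subset\{y_1\ge0\}\).
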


\begin{proof}
We assume that \(\eta\in O(\mu,\nu)\) and \textit{(i)} holds.
Since \((\mathrm{pr}_1)_\#\eta=\mu\) and \(\mu(\{x_1<0\})=0\), we have
\[
\eta(\Delta\cap\{y_1<0\})
=\eta(\{x=y,\ x_1<0\})
\le \eta(\{x_1<0\})
=\mu(\{x_1<0\})
=0,
\]
which implies \(\eta(\Delta\cap\{y_1\ge0\})=\eta(\Delta)=\frac12\).
From \(\eta(\{y_1\ge0\})=\nu(\{y_1\ge0\})=\frac12\), it follows that \(\eta(\{y_1\ge0\}\setminus\Delta)=0\), that is, \(\eta\llcorner\{y_1\ge0\}\) is supported on \(\Delta\).
Any finite measure supported on \(\Delta\) has the form \((\id,\id)_\#\alpha\) for \(\alpha=(\mathrm{pr}_1)_\#\zeta=(\mathrm{pr}_2)_\#\zeta\).
We now apply this to \(\zeta:=\eta\llcorner\{y_1\ge0\}\) and obtain
\[
\eta\llcorner\{y_1\ge0\}=(\id,\id)_\#\alpha,
\qquad
\alpha=(\mathrm{pr}_2)_\#(\eta\llcorner\{y_1\ge0\})
=\nu\llcorner\{y_1\ge0\}=\frac12\,\mu,
\]
so \(\eta\llcorner\{y_1\ge0\}=\frac12(\id,\id)_\#\mu\) follows.

Conversely, let us assume that \textit{(ii)} holds. It follows that
\(\eta(\Delta\cap\{y_1\ge0\})=\eta(\{y_1\ge0\})=\frac12\) because \((\id,\id)_\#\mu\) is supported on \(\Delta\). Moreover, \((\mathrm{pr}_1)_\#\eta=\mu\) and \(\mu(\{x_1<0\})=0\) imply
\[
\eta(\Delta\cap\{y_1<0\})
=\eta(\{x=y,\ x_1<0\})
\le \eta(\{x_1<0\})
=0.
\]
It follows that \(\eta(\Delta)=\frac12\).
\end{proof}

\begin{lemma}[Decomposition of attainable plans]\label{lem:Aatt-decomp}
Let \(\eta\in\Aatt\), and define
\[
\widehat\eta:=2\,\eta\llcorner\{y_1<0\}.
\]
We then have \(\widehat\eta\in\Pcal(K\times K)\) and
\[
\eta=\frac12(\id,\id)_\#\mu+\frac12\widehat\eta,
\]
\[
(\mathrm{pr}_1)_\#\widehat\eta=\mu,
\qquad
(\mathrm{pr}_2)_\#\widehat\eta=S_\#\mu,
\qquad
\widehat\eta(\Sigma)=1.
\]
\end{lemma}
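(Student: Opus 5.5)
The plan is to read everything off Lemma~\ref{lem:Delta-equiv} and the marginal constraints of \(\eta\). Since \(\eta\in\Aatt\subset O(\mu,\nu)\), Lemma~\ref{lem:Delta-equiv} gives \(\eta\llcorner\{y_1\ge0\}=\frac12(\id,\id)_\#\mu\). Writing \(\eta=\eta\llcorner\{y_1\ge0\}+\eta\llcorner\{y_1<0\}\), we then obtain
\[
\eta=\tfrac12(\id,\id)_\#\mu+\tfrac12\widehat\eta .
\]
The remaining claims about \(\widehat\eta\) are immediate from this identity.

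\emph{Mass.} The total mass is \(\widehat\eta(K\times K)=2\eta(\{y_1<0\})=2\nu(\{y_1<0\})\). Since \(\nu=\frac12\mu+\frac12S_\#\mu\), \(\mu\) lives on \(\{y_1\ge0\}\), and \(S_\#\mu=\LL^2\llcorner[-1,0)\times[0,1]\) up to a null set, this mass equals \(2\cdot\frac12=1\). Hence \(\widehat\eta\in\Pcal(K\times K)\).

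\emph{Marginals.} Apply \((\mathrm{pr}_1)_\#\) to the decomposition: \(\mu=\frac12\mu+\frac12(\mathrm{pr}_1)_\#\widehat\eta\), so \((\mathrm{pr}_1)_\#\widehat\eta=\mu\). For the second marginal, compute directly
\[
(\mathrm{pr}_2)_\#\widehat\eta=2\,\nu\llcorner\{y_1<0\}=2\cdot\tfrac12\,S_\#\mu\llcorner\{y_1<0\}=S_\#\mu .
\]
This uses that \(\mu(\{y_1<0\})=0\) and that \(S_\#\mu(\{y_1\ge0\})=\mu(\{x_1=1\})=0\).

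\emph{Support.} Since \(\eta(\Sigma)=1\) by Lemma~\ref{lem:contact-set}, we have \(\eta(\{y_1<0\}\setminus\Sigma)=0\), and therefore \(\widehat\eta(\Sigma)=2\eta(\Sigma\cap\{y_1<0\})=1\).

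The only delicate point is the boundary line \(\{y_1=0\}\), which must be shown to be \(\nu\)-null so that the split into \(\{y_1\ge0\}\) and \(\{y_1<0\}\) matches the two halves of \(\nu\) exactly. This holds because \(\nu\) is absolutely continuous.
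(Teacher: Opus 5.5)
Your proof is correct and follows the paper's argument: you use Lemma~\ref{lem:Delta-equiv} to get \(\eta\llcorner\{y_1\ge0\}=\frac12(\id,\id)_\#\mu\), compute the total mass from \(\nu(\{y_1<0\})=\frac12\), and read off the first marginal and the support on \(\Sigma\) from the decomposition. The only cosmetic difference is in the second marginal, where you restrict \(\nu\) to \(\{y_1<0\}\) directly instead of computing \(2(\nu-\frac12\mu)\) as the paper does, and the two computations are equivalent.
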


\begin{proof}
Since \((\mathrm{pr}_2)_\#\eta=\nu=\frac12\mu+\frac12 S_\#\mu\), we have
\[
\eta(\{y_1<0\})=\nu(\{y_1<0\})=\frac12,
\]
so \(\widehat\eta\) is a probability measure. By Lemma~\ref{lem:Delta-equiv},
\[
\eta\llcorner\{y_1\ge0\}=\frac12(\id,\id)_\#\mu.
\]
We then have,
\[
\eta=\eta\llcorner\{y_1\ge0\}+\eta\llcorner\{y_1<0\}
=\frac12(\id,\id)_\#\mu+\frac12\widehat\eta.
\]
Because \(\eta\in O(\mu,\nu)\), Lemma~\ref{lem:contact-set} gives \(\eta(\Sigma)=1\), so \(\widehat\eta(\Sigma)=1\) follows.

For the first marginal,
\[
(\mathrm{pr}_1)_\#\widehat\eta
=2(\mathrm{pr}_1)_\#(\eta\llcorner\{y_1<0\})
=2\Bigl(\mu-(\mathrm{pr}_1)_\#(\eta\llcorner\{y_1\ge0\})\Bigr)
=2\Bigl(\mu-\frac12\mu\Bigr)
=\mu.
\]
Analogously,
\[
(\mathrm{pr}_2)_\#\widehat\eta
=2(\mathrm{pr}_2)_\#(\eta\llcorner\{y_1<0\})
=2\Bigl(\nu-(\mathrm{pr}_2)_\#(\eta\llcorner\{y_1\ge0\})\Bigr)
=2\Bigl(\nu-\frac12\mu\Bigr)
=S_\#\mu.
\]
\end{proof}

\begin{proposition}[Rigidity and limsup inclusion for \(O(\mu,\nu_n)\)]\label{prop:stable-limsup}
Let \(\eta_n\in O(\mu,\nu_n)\). It holds that \(\eta_n(\Delta)=\frac12\) for every \(n\), and any narrow limit point \(\eta\) of \((\eta_n)\) satisfies \(\eta\in\Aatt\).
\end{proposition}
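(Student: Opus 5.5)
The plan has two parts: first show that every \(\eta_n\in O(\mu,\nu_n)\) carries mass exactly \(\tfrac12\) on \(\Delta\), then pass this to narrow limits.

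\emph{Rigidity.} Fix \(\eta_n\in O(\mu,\nu_n)\) and use the fibre couplings \(\bar\eta_n^t\) of Lemma~\ref{lem:fiberwise-admissible}(i), together with the reconstruction \(\eta_n^t=(J_t)_\#\bar\eta_n^t\) from Lemma~\ref{lem:fiberwise-reduction}. For a.e.\ \(t\in A_n\), Lemma~\ref{lem:fiber-order-rigidity} gives \(\bar\eta_n^t=(\id,\id)_\#(\LL^1\!\llcorner[0,1])\), so \(\eta_n^t(\Delta)=1\). For a.e.\ \(t\in B_n\), the first marginal lives on \([0,1]\) and the second on \([-1,0]\), so the diagonal meets the support only at the point \((0,0)\). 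This point is \(\bar\eta_n^t\)-null because the first marginal is Lebesgue, hence \(\eta_n^t(\Delta)=0\). Since \(\eta_n^t\) is supported on \(\{x_2=y_2=t\}\), the set \(\Delta\) corresponds to \(\{x_1=y_1\}\) on each fibre. Integrating in \(t\) gives \(\eta_n(\Delta)=\LL^1(A_n)=\tfrac12\).

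\emph{Limit.} Let \(\eta_{n_k}\rightharpoonup\eta\). The marginals pass to the limit: \((\mathrm{pr}_1)_\#\eta=\mu\), and \((\mathrm{pr}_2)_\#\eta=\nu\) by Lemma~\ref{lem:conv-nu-gamma}. The set \(\Sigma\) is closed, so \(\eta(\Sigma)\ge\limsup_k\eta_{n_k}(\Sigma)=1\). Lemma~\ref{lem:contact-set} then gives \(\eta\in O(\mu,\nu)\). Alternatively, \(C_1\) is narrowly continuous on compact \(K\times K\), so \(C_1(\eta)=\tfrac12=m_1(\mu,\nu)\).

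\emph{The diagonal mass (main obstacle).} Narrow convergence only yields the upper semicontinuity \(\eta(\Delta)\ge\limsup_k\eta_{n_k}(\Delta)=\tfrac12\), since \(\Delta\) is closed. The reverse inequality needs more information, and I would get it from the structure of the target rather than from \(\Delta\) itself. Consider the closed set \(\{y_1\ge 0\}\cap\Sigma\cap\Delta^c\). On \(\Sigma\) we have \(y_1\le x_1\), so I would instead look for a bound on \(\eta(\{y_1\ge0\}\setminus\Delta)\). The key step is to rewrite the diagonal mass as an integral of a continuous function. Each \(\eta_n\) is supported on \(\Sigma\), and its restriction to \(\{y_1\ge0\}\) equals \(\int_{A_n}\eta_n^t\,dt\), which is diagonal. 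Hence
\[
\int (x_1-y_1)\,\mathbf 1_{\{y_1\ge0\}}\,d\eta_n=0,
\qquad\text{equivalently}\qquad
\int (x_1-y_1)\,y_1^+\,d\eta_n=0,
\]
because \(x_1-y_1\ge0\) and \(y_1^+>0\) only off \(\{y_1\le 0\}\). The integrand \((x_1-y_1)y_1^+\) is continuous, so passing to the limit gives \(\int(x_1-y_1)y_1^+\,d\eta=0\). Since \(\eta\) is supported on \(\Sigma\), the integrand is nonnegative \(\eta\)-a.e., and therefore \(x_1=y_1\) \(\eta\)-a.e.\ on \(\{y_1>0\}\). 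Combined with \(x_2=y_2\), this says \(\eta\llcorner\{y_1>0\}\) is supported on \(\Delta\). The boundary \(\{y_1=0\}\) is \(\eta\)-null because \(\nu\) is absolutely continuous. The argument of Lemma~\ref{lem:Delta-equiv} then yields \(\eta\llcorner\{y_1\ge0\}=\tfrac12(\id,\id)_\#\mu\), hence \(\eta(\Delta)=\tfrac12\) and \(\eta\in\Aatt\).
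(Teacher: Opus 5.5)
Your proof is correct. The rigidity step and the proof that \(\eta\in O(\mu,\nu)\) match the paper's, but you handle the diagonal mass of the limit by a different and longer route.

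The paper pairs the Portmanteau bound \(\eta(\Delta)\ge\frac12\) with an upper bound that holds for \emph{every} \(\eta\in\Pi(\mu,\nu)\). Since \(\Delta\cap\{y_1<0\}\subset\{x_1<0\}\) is \(\eta\)-null,
\[
\eta(\Delta)\le\eta(\{y_1\ge0\})=\nu(\{y_1\ge0\})=\tfrac12 .
\]
So your remark that the reverse inequality ``needs more information'' is mistaken. It follows in one line from the marginals. Your own closing appeal to Lemma~\ref{lem:Delta-equiv}, in the direction (ii)\(\Rightarrow\)(i), uses exactly this observation.

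Your route works differently. You pass the structural fact that \(\eta_n\llcorner\{y_1\ge0\}=\int_{A_n}\eta_n^t\,dt\) is diagonal to the limit, using the continuous test function \((x_1-y_1)y_1^+\). You then use \(\nu(\{y_1=0\})=0\) to cover the boundary. This gives \(\eta\llcorner\{y_1\ge0\}=\frac12(\id,\id)_\#\mu\) directly, i.e.\ the decomposition of Lemma~\ref{lem:Aatt-decomp}. It needs no semicontinuity of set masses, and it never uses the scalar identity \(\eta_n(\Delta)=\frac12\). That makes the Portmanteau lower bound you wrote down superfluous. The mechanism is sound and somewhat more robust, but it is more than this statement needs.

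Two minor slips do not affect the argument. The set \(\{y_1\ge0\}\cap\Sigma\cap\Delta^c\) you mention is not closed. Your ``equivalently'' between the two vanishing integrals is only used, and only justified, as a one-way implication.
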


\begin{proof}
We fix \(n\) and \(\eta_n\in O(\mu,\nu_n)\). By Lemma~\ref{lem:fiberwise-admissible}(i), the fibre couplings over \(A_n\) have equal marginals \(\LL^1\!\llcorner[0,1]\) and are supported on \(\{y\le x\}\). By Lemma~\ref{lem:fiber-order-rigidity}, we obtain
\[
\bar\eta_n^t=(\id,\id)_\#(\LL^1\!\llcorner[0,1])
\quad\text{for a.e.\ }t\in A_n.
\]
It implies that these fibres contribute diagonal mass \(1\). For a.e.\ \(t\in B_n\), the first marginal is supported on \([0,1]\) and the second on \([-1,0]\), so diagonal mass can occur only at \((0,0)\). Note that both marginals are nonatomic, so \(\bar\eta_n^t(\{(0,0)\})=0\) holds. Therefore,
\[
\eta_n(\Delta)=\int_{A_n}1\,dt+\int_{B_n}0\,dt=\frac12.
\]
Let \(\eta_n\rightharpoonup\eta\) along a subsequence. By the continuity of the projections,
\[
(\mathrm{pr}_1)_\#\eta=\mu,\qquad
(\mathrm{pr}_2)_\#\eta=\nu.
\]
Since \(\Sigma\) is closed and \(\eta_n(\Sigma)=1\), Portmanteau gives \(\eta(\Sigma)=1\), so \(\eta\in O(\mu,\nu)\) by Lemma~\ref{lem:contact-set}. Analogously, the closedness of \(\Delta\) gives
\[
\eta(\Delta)\ge \limsup_{n\to\infty}\eta_n(\Delta)=\frac12.
\]
On the other hand, \((\mathrm{pr}_1)_\#\eta=\mu\) and \(\mu(\{x_1<0\})=0\) imply
\[
\eta(\Delta\cap\{y_1<0\})=\eta(\{x=y,\ x_1<0\})=0,
\]
so
\[
\eta(\Delta)=\eta(\Delta\cap\{y_1\ge0\})\le \eta(\{y_1\ge0\})=\nu(\{y_1\ge0\})=\frac12.
\]
We conclude that \(\eta(\Delta)=\frac12\), and thus \(\eta\in\Aatt\).
\end{proof}

\begin{proposition}[Recovery and liminf inclusion for \(\Aatt\)]\label{prop:stable-liminf}
For every \(\eta\in\Aatt\), there exists a sequence \(\eta_n\in O(\mu,\nu_n)\) such that \(\eta_n\rightharpoonup\eta\).
\end{proposition}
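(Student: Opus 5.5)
We construct the recovery sequence by modifying \(\widehat\eta\), using the decomposition of Lemma~\ref{lem:Aatt-decomp}. Write
\[
\eta=\tfrac12(\id,\id)_\#\mu+\tfrac12\widehat\eta,
\]
where \(\widehat\eta\) has marginals \(\mu\) and \(S_\#\mu\) and is supported on \(\Sigma\). By Lemma~\ref{lem:fiberwise-reduction} (applied with \(\widetilde\nu=S_\#\mu\)), \(\widehat\eta=\int_0^1\widehat\eta^t\,dt\). The fibre couplings \(\bar{\widehat\eta}{}^t\) have first marginal \(\LL^1\!\llcorner[0,1]\) and second marginal \(\LL^1\!\llcorner[-1,0]\). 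They are supported on \(\{y\le x\}\), which is automatic since \(y\le 0\le x\).

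The natural candidate is
\[
\eta_n:=\int_0^1\eta_n^t\,dt,\qquad
\eta_n^t:=\begin{cases}(J_t)_\#(\id,\id)_\#(\LL^1\!\llcorner[0,1]), & t\in A_n,\\ 2\,\mathbf 1_{B_n}(t)\,\text{(shifted copy)}, & t\in B_n.\end{cases}
\]
Naively one would put \(\eta_n^t:=\widehat\eta^t\) for \(t\in B_n\). Fibrewise this gives a coupling of \(\mu\) and \(\nu_n\) supported on \(\Sigma\), hence an element of \(O(\mu,\nu_n)\) by Lemma~\ref{lem:contact-set}, since \(\nu_n^t\) on \(B_n\) is exactly \((\LL^1\!\llcorner[-1,0])\otimes\delta_t\). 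However, this would give the limit \(\tfrac12(\id,\id)_\#\mu+\tfrac12\widehat\eta\) only via the averaging lemma, Lemma~\ref{lem:weakstar-An}. That lemma requires testing \(\Psi\in C(K\times K)\) against \(t\mapsto\int\Psi\,d\widehat\eta^t\), which is merely an \(L^1\) function of \(t\). Since \(\mathbf 1_{B_n}\rightharpoonup\frac12\) weakly-\(*\) in \(L^\infty\), we get \(\int_{B_n}g(t)\,dt\to\frac12\int_0^1 g\) for every \(g\in L^1\). So in fact the naive choice works directly:
\[
\int\Psi\,d\eta_n=\int_{A_n}g_1\,dt+\int_{B_n}\widehat g\,dt\to\tfrac12\int g_1+\tfrac12\int\widehat g=\int\Psi\,d\eta,
\]
with \(g_1(t)=\int_0^1\Psi((s,t),(s,t))\,ds\) and \(\widehat g(t)=\int\Psi\,d\widehat\eta^t\).

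So the plan is as follows. First, disintegrate \(\widehat\eta\) fibrewise. Second, define \(\eta_n\) by the identity fibre on \(A_n\) and the \(\widehat\eta\)-fibre on \(B_n\); this is a measurable family since \(A_n\) is Borel. Third, check the marginals. The first marginal on each fibre is \((\LL^1\!\llcorner[0,1])\otimes\delta_t\), so integrating gives \(\mu\). The second marginal on each fibre is \(\nu_n^t\) by \eqref{eq:fibre-disintegration-n}, so integrating gives \(\nu_n\). Fourth, check that \(\eta_n(\Sigma)=1\), so that \(\eta_n\in O(\mu,\nu_n)\). Fifth, pass to the limit using Lemma~\ref{lem:weakstar-An} as above.

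The main technical point is the measurability of \(t\mapsto\widehat\eta^t\) and the integrability of \(\widehat g\). These come from the disintegration theorem together with the boundedness of \(\Psi\) on the compact set \(K\times K\), so \(\widehat g\in L^\infty\). No mixing between fibres is needed because \(\widehat\eta\) already lives fibrewise on \(\{x_2=y_2\}\).
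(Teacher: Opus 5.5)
Your final construction is exactly the paper's proof, and it is correct: identity fibres $(\id,\id)_\#((\LL^1\!\llcorner[0,1])\otimes\delta_t)$ on $A_n$, the disintegration fibres $\widehat\eta^t$ of $\widehat\eta$ from Lemma~\ref{lem:Aatt-decomp} on $B_n$, the checks of the marginals and of the $\Sigma$-support, and the passage to the limit via Lemma~\ref{lem:weakstar-An}. The first displayed ``candidate'', with the factor $2\,\mathbf 1_{B_n}(t)$ and the ``shifted copy'', is garbled and should be deleted; as you yourself conclude, the unmodified fibres $\widehat\eta^t$ on $B_n$ already work.
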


\begin{proof}
We fix \(\eta\in\Aatt\), and let \(\widehat\eta\) be given by Lemma~\ref{lem:Aatt-decomp}. It follows that
\[
\eta=\frac12(\id,\id)_\#\mu+\frac12\widehat\eta,
\qquad
(\mathrm{pr}_1)_\#\widehat\eta=\mu,
\qquad
(\mathrm{pr}_2)_\#\widehat\eta=S_\#\mu,
\qquad
\widehat\eta(\Sigma)=1.
\]
We now disintegrate \(\widehat\eta\) with respect to \(x_2\):
\[
\widehat\eta=\int_0^1 \widehat\eta^t\,dt,
\qquad
\widehat\eta^t(\{x_2=y_2=t\})=1
\quad\text{for a.e.\ }t\in[0,1].
\]
After the modification on a null set of \(t\), we may assume that \(\widehat\eta^t(\Sigma)=1\) for a.e.\ \(t\), since \(\widehat\eta(\Sigma)=1\) and \(\Sigma\) is Borel. Let us now choose the explicit disintegration as follows:
\[
S_\#\mu=\int_0^1\widetilde\nu^t\,dt,
\qquad
\widetilde\nu^t:=(\LL^1\!\llcorner[-1,0])\otimes\delta_t.
\]
We apply Lemma~\ref{lem:fiberwise-reduction} to \(\widehat\eta\), and after the modification on a null set of \(t\), we assume that
\[
\bar{\widehat\eta}^{\,t}:=P_\#\widehat\eta^t
\]
satisfies
\[
(\mathrm{pr}_1)_\#\bar{\widehat\eta}^{\,t}=\LL^1\!\llcorner[0,1],
\qquad
(\mathrm{pr}_2)_\#\bar{\widehat\eta}^{\,t}=\LL^1\!\llcorner[-1,0],
\qquad
\widehat\eta^t=(J_t)_\#\bar{\widehat\eta}^{\,t}
\]
for a.e.\ \(t\in[0,1]\). In particular,
\[
(\mathrm{pr}_1)_\#\widehat\eta^t
=(\LL^1\!\llcorner[0,1])\otimes\delta_t,
\qquad
(\mathrm{pr}_2)_\#\widehat\eta^t
=(\LL^1\!\llcorner[-1,0])\otimes\delta_t
\]
for a.e.\ \(t\in[0,1]\).

For each \(t\in[0,1]\), let
\[
\eta^{+,t}:=(\id,\id)_\#\bigl((\LL^1\!\llcorner[0,1])\otimes\delta_t\bigr),
\]
and define
\[
\eta_n:=\int_{A_n}\eta^{+,t}\,dt+\int_{B_n}\widehat\eta^t\,dt.
\]
Observe that this is well defined because \(t\mapsto \eta^{+,t}\) is continuous in the narrow topology, while \(t\mapsto \widehat\eta^t\) is narrowly measurable by the disintegration theorem. In other terms, for every \(\Psi\in C(K\times K)\), the maps \(t\mapsto \int \Psi\,d\eta^{+,t}\) and \(t\mapsto \int \Psi\,d\widehat\eta^t\) are measurable.

Its first marginal is
\[
(\mathrm{pr}_1)_\#\eta_n
=\int_{A_n}(\LL^1\!\llcorner[0,1]\otimes\delta_t)\,dt
+\int_{B_n}(\LL^1\!\llcorner[0,1]\otimes\delta_t)\,dt
=\int_0^1(\LL^1\!\llcorner[0,1]\otimes\delta_t)\,dt
=\mu.
\]
Similarly, we have
\[
(\mathrm{pr}_2)_\#\eta_n
=\int_{A_n}(\LL^1\!\llcorner[0,1]\otimes\delta_t)\,dt
+\int_{B_n}(\LL^1\!\llcorner[-1,0]\otimes\delta_t)\,dt
=\nu_n.
\]
Moreover, \(\eta_n(\Sigma)=1\) because both \(\eta^{+,t}\) and \(\widehat\eta^t\) are supported on \(\Sigma\). It follows that \(\eta_n\in O(\mu,\nu_n)\) by Lemma~\ref{lem:contact-set}.

Let \(\Psi\in C(K\times K)\), and set
\[
F^+(t):=\int\Psi\,d\eta^{+,t},
\qquad
F^-(t):=\int\Psi\,d\widehat\eta^t.
\]
Note that both functions belong to \(L^1(0,1)\). Since
\[
\eta=\frac12\int_0^1\eta^{+,t}\,dt+\frac12\int_0^1\widehat\eta^t\,dt,
\]
Lemma~\ref{lem:weakstar-An} yields
\[
\int\Psi\,d\eta_n
=
\int_{A_n}F^+(t)\,dt+\int_{B_n}F^-(t)\,dt
\longrightarrow
\frac12\int_0^1F^+(t)\,dt+\frac12\int_0^1F^-(t)\,dt
=
\int\Psi\,d\eta.
\]
Finally, it follows that \(\eta_n\rightharpoonup\eta\).
\end{proof}

\begin{proof}[Proof of Theorem~\ref{thm:intro-stable}]
Proposition~\ref{prop:stable-limsup} yields
\[
\limsup_{n\to\infty} O(\mu,\nu_n)\subset \Aatt,
\]
and Proposition~\ref{prop:stable-liminf} gives
\[
\Aatt\subset \liminf_{n\to\infty} O(\mu,\nu_n).
\]
Since \(K\times K\) is compact metric, \(\Pcal(K\times K)\) endowed with the narrow topology is metrisable. This means that the existence of a sequence \(\eta_n\in O(\mu,\nu_n)\) with \(\eta_n\rightharpoonup\eta\) implies \(\eta\in\liminf_{n\to\infty} O(\mu,\nu_n)\).
It holds that
\[
\lim_{n\to\infty} O(\mu,\nu_n)=\Aatt.
\]
\end{proof}

\section{Constrained \texorpdfstring{\(\Gamma\)}{Gamma}-convergence and the quadratic perturbation}\label{sec:variational}
Observe that the integrand is continuous, so the \(\Gamma\)-convergence analysis reduces to the behaviour of the constraint sets \(O(\mu,\nu_n)\) and their limit class \(\Aatt\). We first consider the constrained \texorpdfstring{\(\Gamma\)}{Gamma}-convergence on the attainable class. To this end, let \(\Phi:[0,2]\to\mathbb R\) be continuous, and let
\[
\widetilde\Phi:[0,\sqrt5]\to\mathbb R
\]
be any continuous extension of \(\Phi\). Define
\[
g_\Phi(x,y):=\widetilde\Phi(|x-y|),
\qquad
(x,y)\in K\times K.
\]
By Lemma~\ref{lem:contact-set}, we conclude that every plan in \(O(\mu,\nu_n)\cup O(\mu,\nu)\) is supported on
\[
\Sigma=\{(x,y)\in K\times K:\ x_2=y_2,\ y_1\le x_1\},
\]
and so \(|x-y|\le 2\) on the support of every admissible plan that we consider below.
We deduce that the values of the functionals \(\mathcal F_n\) and \(\mathcal F_\infty\) do not depend on the chosen extension \(\widetilde\Phi\). Let us set
\begin{equation}\label{eq:gamma-Fn}
\mathcal F_n(\eta):=
\begin{cases}
\displaystyle \int_{K\times K} g_\Phi(x,y)\,d\eta, & \eta\in O(\mu,\nu_n),\\[0.3em]
+\infty, & \text{otherwise},
\end{cases}
\end{equation}
\begin{equation}\label{eq:gamma-Finf}
\mathcal F_\infty(\eta):=
\begin{cases}
\displaystyle \int_{K\times K} g_\Phi(x,y)\,d\eta, & \eta\in\Aatt,\\[0.3em]
+\infty, & \text{otherwise}.
\end{cases}
\end{equation}

\begin{proof}[Proof of Theorem~\ref{thm:main-gamma}]
Since \(g_\Phi\in C(K\times K)\), narrow convergence implies convergence of the integrals of \(g_\Phi\). For the \(\Gamma\)-\(\liminf\) inequality, let \(\eta_n\rightharpoonup\eta\) in \(\Pcal(K\times K)\). If \(\liminf_{n\to\infty}\mathcal F_n(\eta_n)=+\infty\), there is nothing left to prove. After we extract a subsequence that realises the liminf, we may assume that \(\sup_n\mathcal F_n(\eta_n)<+\infty\). We deduce that \(\eta_n\in O(\mu,\nu_n)\) for every \(n\), and Theorem~\ref{thm:intro-stable} yields \(\eta\in\Aatt\). As a result, we obtain
\[
\mathcal F_\infty(\eta)
=
\int_{K\times K} g_\Phi\,d\eta
=
\lim_{n\to\infty}\int_{K\times K} g_\Phi\,d\eta_n
=
\liminf_{n\to\infty}\mathcal F_n(\eta_n).
\]

Let us fix \(\eta\in\Pcal(K\times K)\) for the \(\Gamma\)-\(\limsup\) inequality. If \(\eta\notin\Aatt\), then \(\mathcal F_\infty(\eta)=+\infty\), so there is nothing to show. If \(\eta\in\Aatt\), Proposition~\ref{prop:stable-liminf} provides a recovery sequence \(\eta_n\in O(\mu,\nu_n)\) such that \(\eta_n\rightharpoonup\eta\). We have
\[
\lim_{n\to\infty}\mathcal F_n(\eta_n)
=
\lim_{n\to\infty}\int_{K\times K} g_\Phi\,d\eta_n
=
\int_{K\times K} g_\Phi\,d\eta
=
\mathcal F_\infty(\eta).
\]
\end{proof}

\begin{corollary}[Homogenised minimiser for strictly convex energies]\label{cor:hom-minimizer}
In addition, we assume that \(\Phi\) is strictly convex on \([0,2]\).
It follows that \(\mathcal F_\infty\) has a unique minimiser, namely,
\[
\gamma^{\mathrm{hom}}:=\frac12(\id,\id)_\#\mu+\frac12(\id,S)_\#\mu,
\]
and
\[
\min_{\eta\in\Pcal(K\times K)}\mathcal F_\infty(\eta)
=\frac12\,\Phi(0)+\frac12\,\Phi(1).
\]
The above implies that every sequence of minimisers \((\eta_n)\) of \(\mathcal F_n\) converges narrowly to \(\gamma^{\mathrm{hom}}\).
\end{corollary}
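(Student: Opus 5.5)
The strategy is to minimise \(\mathcal F_\infty\) over \(\Aatt\) by reducing to a one-dimensional fibre problem on the non-diagonal half of the plan. The decomposition in Lemma~\ref{lem:Aatt-decomp} does this reduction, and Lemma~\ref{lem:fiber-strict-convex} supplies the fibrewise bound.

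Fix \(\eta\in\Aatt\) and write, by Lemma~\ref{lem:Aatt-decomp},
\[
\eta=\tfrac12(\id,\id)_\#\mu+\tfrac12\widehat\eta,
\qquad
(\mathrm{pr}_1)_\#\widehat\eta=\mu,\quad (\mathrm{pr}_2)_\#\widehat\eta=S_\#\mu,\quad \widehat\eta(\Sigma)=1 .
\]
Then
\[
\mathcal F_\infty(\eta)=\tfrac12\Phi(0)+\tfrac12\int g_\Phi\,d\widehat\eta .
\]

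To bound the second term, apply Lemma~\ref{lem:fiberwise-reduction} to \(\widehat\eta\), with \(\widetilde\nu=S_\#\mu\) and the explicit disintegration \(\widetilde\nu^t=(\LL^1\!\llcorner[-1,0])\otimes\delta_t\). This gives, for a.e.\ \(t\), fibre couplings \(\bar{\widehat\eta}^{\,t}\) with marginals \(\LL^1\!\llcorner[0,1]\) and \(\LL^1\!\llcorner[-1,0]\). On \(\Sigma\) we have \(|x-y|=|x_1-y_1|\le 2\), so Corollary~\ref{cor:fiberwise-decomposition} applies with \(\psi(x,y)=\Phi(|x-y|)\) (bounded, hence integrable) and yields
\[
\int g_\Phi\,d\widehat\eta=\int_0^1\!\int\Phi(|x-y|)\,d\bar{\widehat\eta}^{\,t}\,dt .
\]
Lemma~\ref{lem:fiber-strict-convex} bounds each inner integral below by \(\Phi(1)\). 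Hence \(\mathcal F_\infty(\eta)\ge\frac12\Phi(0)+\frac12\Phi(1)\).

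Equality holds only if \(\bar{\widehat\eta}^{\,t}=(\id,\id-1)_\#\LL^1\!\llcorner[0,1]\) for a.e.\ \(t\). The reconstruction \(\widehat\eta^t=(J_t)_\#\bar{\widehat\eta}^{\,t}\) then forces \(\widehat\eta=(\id,S)_\#\mu\), that is, \(\eta=\gamma^{\mathrm{hom}}\). Conversely, Theorem~\ref{thm:main} gives \(\gamma^{\mathrm{hom}}\in O(\mu,\nu)\) with \(\gamma^{\mathrm{hom}}(\Delta)=\frac12\), so \(\gamma^{\mathrm{hom}}\in\Aatt\). A direct computation gives \(\mathcal F_\infty(\gamma^{\mathrm{hom}})=\frac12\Phi(0)+\frac12\Phi(1)\). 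This proves existence, uniqueness and the value of the minimum.

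For the convergence of minimisers, take \(\eta_n\) minimising \(\mathcal F_n\); they exist since \(O(\mu,\nu_n)\) is narrowly compact and \(g_\Phi\) is continuous. By compactness of \(\Pcal(K\times K)\), every subsequence has a further subsequence converging narrowly to some \(\eta\). The fundamental theorem of \(\Gamma\)-convergence, combined with Theorem~\ref{thm:main-gamma}, shows that \(\eta\) minimises \(\mathcal F_\infty\): the liminf inequality gives \(\mathcal F_\infty(\eta)\le\liminf\mathcal F_n(\eta_n)\), and the recovery sequence for any competitor gives the reverse comparison. By uniqueness \(\eta=\gamma^{\mathrm{hom}}\), so the whole sequence converges.

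The main technical point is the equality case. It is a measure-theoretic bookkeeping step: the fibre equality must hold on a full-measure set of \(t\), and this has to be transferred back to \(\widehat\eta\) via the reconstruction formula. This works exactly as in Proposition~\ref{prop:sec-n}.
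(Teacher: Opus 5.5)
Your proposal is correct and follows essentially the same route as the paper. Both arguments use the splitting from Lemma~\ref{lem:Aatt-decomp}, then the fibrewise reduction with the bound of Lemma~\ref{lem:fiber-strict-convex}, whose equality case is transferred back to \(\widehat\eta\) via the reconstruction formula, and finally compactness plus the fundamental theorem of \(\Gamma\)-convergence together with uniqueness. You also make explicit two points the paper leaves implicit, namely that \(\gamma^{\mathrm{hom}}\in\Aatt\) attains the bound and that minimisers of \(\mathcal F_n\) exist.
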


\begin{proof}
Let \(\eta\in\Aatt\), and let \(\widehat\eta\) be as in Lemma~\ref{lem:Aatt-decomp}. We then have
\[
\eta=\frac12(\id,\id)_\#\mu+\frac12\widehat\eta,
\]
so
\[
\mathcal F_\infty(\eta)
=\frac12\Phi(0)+\frac12\int_{K\times K}\Phi(|x-y|)\,d\widehat\eta(x,y).
\]
We apply Lemma~\ref{lem:fiberwise-reduction} and Corollary~\ref{cor:fiberwise-decomposition} to \(\widehat\eta\), with \(\widetilde\nu=S_\#\mu\), and obtain a disintegration
\[
\widehat\eta=\int_0^1\widehat\eta^t\,dt
\]
such that, for a.e.\ \(t\), the one-dimensional coupling
\[
\bar{\widehat\eta}^{\,t}:=P_\#\widehat\eta^t
\]
has marginals \(\LL^1\!\llcorner[0,1]\) and \(\LL^1\!\llcorner[-1,0]\). Moreover, it is supported on \(\{y\le x\}\). By Lemma~\ref{lem:fiber-strict-convex},
\[
\int \Phi(|x-y|)\,d\bar{\widehat\eta}^{\,t}(x,y)\ge \Phi(1)
\quad\text{for a.e.\ }t,
\]
with equality if and only if \(y=x-1\) \(\bar{\widehat\eta}^{\,t}\)-a.s.
We integrate in \(t\) and obtain
\[
\mathcal F_\infty(\eta)\ge \frac12\Phi(0)+\frac12\Phi(1).
\]
Observe that equality holds if and only if
\[
\bar{\widehat\eta}^{\,t}=(\id,\id-1)_\#(\LL^1\!\llcorner[0,1])
\quad\text{for a.e.\ }t.
\]
By the reconstruction formula in Lemma~\ref{lem:fiberwise-reduction}, this is equivalent to
\[
\widehat\eta=(\id,S)_\#\mu.
\]
Hence
\[
\eta=\frac12(\id,\id)_\#\mu+\frac12(\id,S)_\#\mu
=\gamma^{\mathrm{hom}},
\]
so \(\gamma^{\mathrm{hom}}\) is the unique minimiser and
\[
\min_{\eta\in\Pcal(K\times K)}\mathcal F_\infty(\eta)
=\frac12\Phi(0)+\frac12\Phi(1).
\]

Let \((\eta_n)\) be minimisers of \(\mathcal F_n\). By compactness, every subsequence admits a narrowly convergent subsubsequence \(\eta_{n_k}\rightharpoonup\bar\eta\). The fundamental theorem of \(\Gamma\)-convergence implies that \(\bar\eta\) minimises \(\mathcal F_\infty\). By the uniqueness we proved above, \(\bar\eta=\gamma^{\mathrm{hom}}\). It follows that every cluster point of \((\eta_n)\) equals \(\gamma^{\mathrm{hom}}\), and therefore \(\eta_n\rightharpoonup\gamma^{\mathrm{hom}}\).
\end{proof}

\begin{remark}[Interpretation of the \(\Gamma\)-limit]
One could also understand the counterexample at the level of sets or at the level of functionals. Notice that at the set level, the optimal-plan sets \(O(\mu,\nu_n)\) converge to the proper subset \(\Aatt\subsetneq O(\mu,\nu)\). At the variational level, the effective \(\Gamma\)-limit is obtained by restricting the secondary problem to \(\Aatt\), rather than to the full optimal-plan set \(O(\mu,\nu)\). In this sense, the microscopic oscillations ``survive'' in the limit through the constrained admissible class.
\end{remark}

\subsection{The additive quadratic perturbation}\label{sec:noncommute}
We now consider the regularised cost
\[
c_\varepsilon(x,y)=|x-y|+\varepsilon|x-y|^2.
\]

\begin{lemma}[Uniqueness of the \(c_\varepsilon\)-optimal plan for \((\mu,\nu)\)]\label{lem:ceps-unique-limit}
For every \(\varepsilon>0\), the optimal transport problem for the cost \(c_\varepsilon\) between \((\mu,\nu)\) has a unique minimiser, which coincides with \(\gamma^{\mathrm{sel}}(\mu,\nu)\) from Proposition~\ref{prop:sec-limit}.
\end{lemma}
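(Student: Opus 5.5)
The key observation is that \(\gamma^{\mathrm{sel}}(\mu,\nu)=(\id\times T)_\#\mu\) simultaneously minimises both pieces of the cost \(c_\varepsilon\) on \(\Pi(\mu,\nu)\). Let \(\gamma^\ast:=\gamma^{\mathrm{sel}}(\mu,\nu)\). Then \(\gamma^\ast\in O(\mu,\nu)\), so \(C_1(\gamma^\ast)=m_1(\mu,\nu)=\frac12\) by Lemma~\ref{lem:contact-set}.

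For the quadratic piece, we show that \(\gamma^\ast\) is the unique minimiser of \(C_2\) over all of \(\Pi(\mu,\nu)\), not just over \(O(\mu,\nu)\). Brenier's theorem gives this, since \(\mu\) is absolutely continuous and \(T=\nabla\varphi\) with \(\varphi(x)=x_1^2-x_1+\frac12x_2^2\) convex. Alternatively, one can argue directly. For any \(\eta\in\Pi(\mu,\nu)\), the quantity \(\int|x-y|^2\,d\eta\) differs from \(-2\int x\cdot y\,d\eta\) by a constant fixed by the marginals. Then
\[
\int x\cdot y\,d\eta\le\int(\varphi(x)+\varphi^*(y))\,d\eta,
\]
and the right-hand side is also fixed by the marginals. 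Equality holds iff \(y\in\partial\varphi(x)=\{T(x)\}\) \(\eta\)-a.e., which forces \(\eta=\gamma^\ast\). In particular \(\min_{\Pi(\mu,\nu)}C_2=C_2(\gamma^\ast)=\frac13\).

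With this in hand, the conclusion is immediate. For any \(\eta\in\Pi(\mu,\nu)\),
\[
\int c_\varepsilon\,d\eta=C_1(\eta)+\varepsilon C_2(\eta)\ge m_1(\mu,\nu)+\varepsilon\,\tfrac13=\int c_\varepsilon\,d\gamma^\ast .
\]
Existence of a \(c_\varepsilon\)-optimal plan follows from compactness and continuity of \(c_\varepsilon\). For uniqueness, suppose \(\eta\) is optimal. Then both inequalities must be equalities, since each term is individually bounded below and \(\varepsilon>0\). In particular \(C_2(\eta)=\min_{\Pi(\mu,\nu)}C_2\), so \(\eta=\gamma^\ast\) by the previous step. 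Equality in the \(C_1\) term is automatic and need not be checked.

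The only real content is the global quadratic optimality of \(\gamma^\ast\) over \(\Pi(\mu,\nu)\); Proposition~\ref{prop:sec-limit} only gives optimality within \(O(\mu,\nu)\). To handle this, I would invoke Brenier's theorem via the explicit convex potential \(\varphi\) above. The only thing to check is that \(T=\nabla\varphi\) on \([0,1]^2\) and that \(T_\#\mu=\nu\), which holds by construction in Proposition~\ref{prop:sec-limit}. No fibrewise argument is needed, because \(c_\varepsilon\) decouples into two terms whose individual minimisers coincide.
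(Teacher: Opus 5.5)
Your proof is correct. It uses the same overall strategy as the paper (both terms of $c_\varepsilon$ are minimised simultaneously by the same plan), but the key quadratic step is done differently.

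The paper stays inside its one-dimensional toolkit. For arbitrary $\eta\in\Pi(\mu,\nu)$ it projects to horizontal coordinates, $P_\#\eta$, whose marginals are $\LL^1\!\llcorner[0,1]$ and $\tfrac12\LL^1\!\llcorner[-1,1]$. It drops the vertical term via $|x-y|^2\ge(x_1-y_1)^2$ and applies Lemma~\ref{lem:fiber-quadratic}(ii), which gives only the lower bound $C_2(\eta)\ge\frac13$. Uniqueness is then obtained indirectly: equality in $C_1(\eta)+\varepsilon C_2(\eta)\ge\frac12+\frac{\varepsilon}{3}$ forces $\eta\in O(\mu,\nu)$ with $C_2(\eta)=\frac13$, and the uniqueness part of Proposition~\ref{prop:sec-limit} finishes.

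You instead use the explicit convex potential $\varphi(x)=x_1^2-x_1+\frac12x_2^2$. Its conjugate $\varphi^*(y)=\frac14(y_1+1)^2+\frac12y_2^2$ is finite, so your Fenchel--Young argument is rigorous. This shows that $(\id\times T)_\#\mu$ is the unique $C_2$-minimiser on all of $\Pi(\mu,\nu)$, i.e.\ the Brenier plan. Uniqueness therefore comes from the quadratic term alone, and the equality $C_1(\eta)=m_1(\mu,\nu)$ is never used.

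Your route gives a stronger and more transparent statement: the $W_1$-selector of the limit pair coincides with its $W_2$-optimal plan, which is exactly why $c_\varepsilon$ decouples. It needs from Proposition~\ref{prop:sec-limit} only that $(\id\times T)_\#\mu\in O(\mu,\nu)$. In fact it re-proves that proposition's uniqueness claim, since a unique minimiser on $\Pi(\mu,\nu)$ that lies in $O(\mu,\nu)$ is a fortiori unique there. The paper's route, in exchange, uses only lemmas already established in its fibrewise framework and avoids two-dimensional convex duality.
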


\begin{proof}
We fix \(\varepsilon>0\), let \(\eta\in\Pi(\mu,\nu)\), and set \(\bar\eta:=P_\#\eta\). We then obtain
\[
(\mathrm{pr}_1)_\#\bar\eta=\LL^1\!\llcorner[0,1],\qquad
(\mathrm{pr}_2)_\#\bar\eta=\tfrac12\,\LL^1\!\llcorner[-1,1].
\]
By Lemma~\ref{lem:fiber-quadratic}(ii),
\[
\int_{\mathbb R^2}(x-y)^2\,d\bar\eta(x,y)\ge \frac13,
\]
Since
\[
|x-y|^2=(x_1-y_1)^2+(x_2-y_2)^2\ge (x_1-y_1)^2,
\]
it follows that
\[
C_2(\eta)\ge \frac13.
\]
Together with \(C_1(\eta)\ge m_1(\mu,\nu)=\frac12\), this gives
\[
\int c_\varepsilon\,d\eta
=
C_1(\eta)+\varepsilon C_2(\eta)
\ge
\frac12+\frac{\varepsilon}{3}.
\]
By Lemma~\ref{lem:contact-set} and Proposition~\ref{prop:sec-limit},
\[
C_1\bigl(\gamma^{\mathrm{sel}}(\mu,\nu)\bigr)=\frac12,
\qquad
C_2\bigl(\gamma^{\mathrm{sel}}(\mu,\nu)\bigr)=\frac13,
\]
so \(\gamma^{\mathrm{sel}}(\mu,\nu)\) attains this lower bound and is \(c_\varepsilon\)-optimal.

If \(\eta\) is \(c_\varepsilon\)-optimal, then equality holds above. We have:
\[
\eta\in O(\mu,\nu)
\qquad\text{and}\qquad
C_2(\eta)=\frac13.
\]
Proposition~\ref{prop:sec-limit} implies
\[
\eta=\gamma^{\mathrm{sel}}(\mu,\nu).
\]
We conclude that the \(c_\varepsilon\)-optimal plan is unique and coincides with \(\gamma^{\mathrm{sel}}(\mu,\nu)\).
\end{proof}

\begin{proof}[Proof of Corollary~\ref{cor:quadratic}]
We analyse the two iterated limits separately.

\textit{First \(\varepsilon\downarrow0\), then \(n\to\infty\).}
Let us fix \(n\in\mathbb N\), let \(\varepsilon_k\downarrow0\), and after passing to a subsequence, we assume that \(\gamma_{n,\varepsilon_k}\rightharpoonup\bar\gamma_n\). Note that the marginals are fixed, so \(\bar\gamma_n\in\Pi(\mu,\nu_n)\). For every \(\eta\in\Pi(\mu,\nu_n)\), minimality gives
\[
C_1(\gamma_{n,\varepsilon_k})+\varepsilon_k C_2(\gamma_{n,\varepsilon_k})
\le
C_1(\eta)+\varepsilon_k C_2(\eta).
\]
Because \(K\times K\) is compact, \(C_2\) is bounded on \(\Pcal(K\times K)\). If we now let \(k\to\infty\), we obtain \(C_1(\bar\gamma_n)\le C_1(\eta)\), so \(\bar\gamma_n\in O(\mu,\nu_n)\). If now \(\eta\in O(\mu,\nu_n)\), then
\[
C_1(\eta)=m_1(\mu,\nu_n)\le C_1(\gamma_{n,\varepsilon_k}).
\]
The same inequality gives
\[
\varepsilon_k C_2(\gamma_{n,\varepsilon_k})
\le
C_1(\eta)-C_1(\gamma_{n,\varepsilon_k})+\varepsilon_k C_2(\eta)
\le
\varepsilon_k C_2(\eta),
\]
and so
\[
C_2(\gamma_{n,\varepsilon_k})\le C_2(\eta).
\]
We now pass to the limit and obtain \(C_2(\bar\gamma_n)\le C_2(\eta)\). Proposition~\ref{prop:sec-n} implies \(\bar\gamma_n=\gamma_n\). We infer that
\[
\gamma_{n,\varepsilon}\rightharpoonup\gamma_n
\qquad\text{as }\varepsilon\downarrow0
\]
for each fixed \(n\). Lemma~\ref{lem:conv-nu-gamma} yields
\[
\lim_{n\to\infty}\lim_{\varepsilon\downarrow0}\gamma_{n,\varepsilon}
=
\lim_{n\to\infty}\gamma_n
=
\gamma^{\mathrm{hom}}.
\]

\textit{First \(n\to\infty\), then \(\varepsilon\downarrow0\).}
We fix \(\varepsilon>0\), and let \(\gamma_{n_k,\varepsilon}\rightharpoonup\bar\gamma_\varepsilon\) along a subsequence. It follows that \(\bar\gamma_\varepsilon\in\Pi(\mu,\nu)\). For any \(\eta\in\Pi(\mu,\nu)\), Lemma~\ref{lem:recovery} provides \(\eta_n\in\Pi(\mu,\nu_n)\) such that \(\eta_n\rightharpoonup\eta\) and
\[
\int c_\varepsilon\,d\eta_n \to \int c_\varepsilon\,d\eta.
\]
Since \(\gamma_{n,\varepsilon}\) is \(c_\varepsilon\)-optimal for \((\mu,\nu_n)\),
\[
\int c_\varepsilon\,d\gamma_{n,\varepsilon}
\le
\int c_\varepsilon\,d\eta_n.
\]
We pass to the limit along \(n_k\) and obtain
\[
\int c_\varepsilon\,d\bar\gamma_\varepsilon
\le
\int c_\varepsilon\,d\eta
\qquad
\forall\,\eta\in\Pi(\mu,\nu).
\]
It implies that \(\bar\gamma_\varepsilon\) is \(c_\varepsilon\)-optimal for \((\mu,\nu)\). By Lemma~\ref{lem:ceps-unique-limit}, this optimal plan is unique and equals \(\gamma^{\mathrm{sel}}(\mu,\nu)\). Therefore,
\[
\gamma_{n,\varepsilon}\rightharpoonup\gamma^{\mathrm{sel}}(\mu,\nu)
\qquad\text{as }n\to\infty
\]
for each fixed \(\varepsilon>0\), and also
\[
\lim_{\varepsilon\downarrow0}\lim_{n\to\infty}\gamma_{n,\varepsilon}
=
\gamma^{\mathrm{sel}}(\mu,\nu).
\]
\end{proof}

\appendix
\section{Auxiliary lemmas}\label{sec:appendix}

\subsection{Recovery}
\begin{lemma}[Recovery under varying second marginal]\label{lem:recovery}
Let \((Z,d)\) be a compact metric space and fix \(\mu\in\Pcal(Z)\).
If \(\nu_n\rightharpoonup\nu\) in \(\Pcal(Z)\), then for every \(\eta\in\Pi(\mu,\nu)\) there exists \(\eta_n\in\Pi(\mu,\nu_n)\) such that \(\eta_n\rightharpoonup\eta\) in \(\Pcal(Z\times Z)\). Moreover, for every continuous \(c:Z\times Z\to\mathbb R\), the following holds:
\[
\int c\,d\eta_n \xrightarrow[n\to\infty]{} \int c\,d\eta.
\]
\end{lemma}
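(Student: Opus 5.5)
The plan is a gluing construction: write $\eta$ through its disintegration with respect to the second coordinate, and re-route that disintegration along a near-optimal coupling between $\nu_n$ and $\nu$.

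\textbf{Step 1 (auxiliary couplings).} Since $Z$ is compact, narrow convergence coincides with $W_1$-convergence on $\Pcal(Z)$. So I fix, for each $n$, an optimal coupling $\sigma_n\in\Pi(\nu_n,\nu)$ with $\int d(y',y)\,d\sigma_n(y',y)=W_1(\nu_n,\nu)\to0$.

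\textbf{Step 2 (gluing).} I disintegrate $\eta=\int \eta_y\otimes\delta_y\,d\nu(y)$, where $\eta_y\in\Pcal(Z)$ is the conditional law of the first coordinate given the second. I then define $\eta_n\in\Pcal(Z\times Z)$ by
\[
\int \Psi\,d\eta_n:=\int_{Z\times Z}\int_Z \Psi(x,y')\,d\eta_y(x)\,d\sigma_n(y',y).
\]
This is the gluing lemma applied to $\eta$ and $\sigma_n$ along the common marginal $\nu$, followed by projection onto the variables $(x,y')$. The marginals are checked directly:
\begin{itemize}
\item[] the first marginal is $\int\eta_y\,d\nu(y)=\mu$, because the $y$-marginal of $\sigma_n$ is $\nu$;
\item[] the second marginal is the $y'$-marginal of $\sigma_n$, which is $\nu_n$.
\end{itemize}
Hence $\eta_n\in\Pi(\mu,\nu_n)$.

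\textbf{Step 3 (convergence).} Let $\Psi\in C(Z\times Z)$. By compactness $\Psi$ is uniformly continuous; let $\omega$ be a modulus of continuity with $\omega$ concave and bounded, so in particular $|\Psi(x,y')-\Psi(x,y)|\le\omega(d(y',y))$. Then
\[
\Bigl|\int\Psi\,d\eta_n-\int\Psi\,d\eta\Bigr|\le\int\omega(d(y',y))\,d\sigma_n .
\]
By Jensen's inequality this is at most $\omega(W_1(\nu_n,\nu))$, which tends to $0$. Alternatively, split the integral on $\{d<\delta\}$ and its complement and apply Markov's inequality.

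This gives $\eta_n\rightharpoonup\eta$. The final claim, $\int c\,d\eta_n\to\int c\,d\eta$, is just the same estimate with $\Psi=c$.

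The only step needing care is the measurability of $y\mapsto\eta_y$, which makes $\eta_n$ well defined. This is supplied by the disintegration theorem on the compact metric space $Z$. Nothing else is a real obstacle.
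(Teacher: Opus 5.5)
Your proposal is correct and follows essentially the same route as the paper: you glue the disintegration $\eta_y$ of $\eta$ along a $W_1$-optimal coupling between $\nu_n$ and $\nu$, then bound the error by $\int\omega(d(y',y))\,d\sigma_n$ using uniform continuity. The differences are cosmetic: you integrate directly against $\sigma_n$ instead of disintegrating it into a kernel $k_n$, and you finish with Jensen for a concave modulus, whereas the paper uses the $\delta$-splitting plus Markov estimate that you list as your alternative.
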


\begin{proof}
Because \(Z\) is compact, narrow convergence \(\nu_n\rightharpoonup\nu\) is equivalent to
\(W_1(\nu_n,\nu)\to0\), where \(W_1\) denotes the \(1\)-Wasserstein distance induced by \(d\). We choose \(\lambda_n\in\Pi(\nu,\nu_n)\) such that
\[
\int_{Z\times Z} d(y,z)\,d\lambda_n(y,z)=W_1(\nu,\nu_n)\xrightarrow[n\to\infty]{}0.
\]
Let us now disintegrate \(\eta(dx,dy)=\eta_y(dx)\,\nu(dy)\), and write
\[
\lambda_n(dy,dz)=k_n(y,dz)\,\nu(dy),
\qquad
\rho_n(dx,dy,dz):=\eta_y(dx)\,k_n(y,dz)\,\nu(dy),
\qquad
\eta_n:=(\mathrm{pr}_{1,3})_\#\rho_n.
\]
We use \(\mathrm{pr}_{1,3}:Z^3\to Z\times Z\) to denote the projection \(\mathrm{pr}_{1,3}(x,y,z):=(x,z)\). It follows that \(\eta_n\in\Pi(\mu,\nu_n)\): its first marginal is \(\mu\), because \(\int \eta_y\,\nu(dy)=\eta\), and its second marginal is \(\nu_n\), because the \((y,z)\)-marginal of \(\rho_n\) is \(\lambda_n\).

Let \(c\in C(Z\times Z)\). We equip \(Z\times Z\) with the sum metric
\[
d_\times((x,y),(x',y')):=d(x,x')+d(y,y').
\]
Given that \(Z\times Z\) is compact, \(c\) is uniformly continuous with respect to \(d_\times\). Let \(\omega_c\) be a modulus of continuity of \(c\) for \(d_\times\).
It follows \(d_\times((x,z),(x,y))=d(y,z)\), and so
\[
\left|\int c\,d\eta_n-\int c\,d\eta\right|
=\left|\int_{Z^3}\!\big(c(x,z)-c(x,y)\big)\,d\rho_n\right|
\le \int_{Z\times Z}\!\omega_c\!\big(d(y,z)\big)\,d\lambda_n(y,z).
\]
We fix \(\varepsilon>0\), choose \(\delta>0\) such that \(\omega_c(r)\le\varepsilon\) for \(0\le r\le\delta\), and note that
\[
\int\omega_c(d(y,z))\,d\lambda_n
\le \varepsilon+2\|c\|_\infty\,\lambda_n(\{d(y,z)>\delta\})
\le \varepsilon+\frac{2\|c\|_\infty}{\delta}\int d(y,z)\,d\lambda_n
\]
\[
=\varepsilon+\frac{2\|c\|_\infty}{\delta}\,W_1(\nu,\nu_n).
\]
If we let \(n\to\infty\), and then \(\varepsilon\downarrow0\), we obtain \(\int c\,d\eta_n\to\int c\,d\eta\). In particular, \(\eta_n\rightharpoonup\eta\) is true.
\end{proof}

\subsection{Strip averaging}
\begin{lemma}[Weak-\(*\) limit of \(\mathbf 1_{A_n}\)]\label{lem:weakstar-An}
The characteristic functions \(\mathbf 1_{A_n}\) converge to \(\frac12\) weak-\(*\) in \(L^\infty([0,1])\). In other words, for every \(f\in L^1([0,1])\),
\[
\int_{A_n} f(t)\,dt \xrightarrow[n\to\infty]{}\frac12\int_0^1 f(t)\,dt,
\qquad
\int_{B_n} f(t)\,dt \xrightarrow[n\to\infty]{}\frac12\int_0^1 f(t)\,dt.
\]
\end{lemma}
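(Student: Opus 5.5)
The plan is to prove the statement first for indicator functions of intervals, then extend by density in \(L^1([0,1])\), using the uniform bound \(\|\mathbf 1_{A_n}\|_{L^\infty}\le1\). The claim for \(B_n\) then follows at once: since \(\mathbf 1_{B_n}=\mathbf 1_{[0,1]}-\mathbf 1_{A_n}\) a.e., we have \(\int_{B_n}f=\int_0^1 f-\int_{A_n}f\to\frac12\int_0^1 f\).

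\emph{Step 1: intervals.} Fix \(0\le a<b\le1\) and take \(f=\mathbf 1_{[a,b]}\). Each cell \([k/n,(k+1)/n)\) contains exactly the piece \([k/n,(2k+1)/(2n))\) of \(A_n\), of length \(\frac1{2n}\), which is half the cell length.
\begin{itemize}
\item The cells entirely contained in \([a,b]\) contribute exactly half their total length.
\item At most two cells meet \([a,b]\) only partially, and together they contribute an error of at most \(2/n\).
\end{itemize}
Hence
\[
\Bigl|\LL^1(A_n\cap[a,b])-\tfrac12(b-a)\Bigr|\le \frac{2}{n}\to0 .
\]
By linearity the same convergence holds for every step function, i.e.\ every finite linear combination of interval indicators.

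\emph{Step 2: density.} Given \(f\in L^1([0,1])\) and \(\varepsilon>0\), choose a step function \(s\) with \(\|f-s\|_{L^1}<\varepsilon\). Then
\[
\Bigl|\int_{A_n}f-\tfrac12\int_0^1 f\Bigr|
\le \int_{A_n}|f-s|+\Bigl|\int_{A_n}s-\tfrac12\int_0^1 s\Bigr|+\tfrac12\|f-s\|_{L^1}
\le \tfrac32\varepsilon+o(1).
\]
Letting \(n\to\infty\) and then \(\varepsilon\downarrow0\) gives the claim for \(A_n\). This is exactly weak-\(*\) convergence of \(\mathbf 1_{A_n}\) to \(\frac12\) in \(L^\infty=(L^1)^*\).

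The only step requiring care is the boundary count in Step 1, and it is elementary. I do not expect any genuine obstacle here; the density argument is standard.
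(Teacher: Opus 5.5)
Your proof is correct and follows the same route as the paper: you verify the limit on a dense subclass of \(L^1([0,1])\), extend it by the identical \(\tfrac32\varepsilon\) triangle-inequality argument, and obtain the \(B_n\) case by complementation. The only difference is the dense class. You use step functions with an exact cell count at the interval endpoints, whereas the paper uses continuous \(g\) and the uniform-continuity bound \(\tfrac14\,\omega_g(1/n)\), which avoids dealing with partial cells.
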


\begin{proof}
By symmetry, it suffices to prove the claim for \(A_n\). To this end, let \(f\in L^1([0,1])\) and \(\varepsilon>0\). Let us choose \(g\in C([0,1])\) such that
\(\|f-g\|_{L^1([0,1])}\le \varepsilon\). We also write \(I_{k,n}:=[k/n,(k+1)/n]\). It follows that \(A_n\cap I_{k,n}=[k/n,k/n+1/(2n)]\). Since \(g\) is uniformly continuous, if we let \(\omega_g(\delta):=\sup\{|g(s)-g(t)|:\ |s-t|\le \delta\}\), we obtain \(\omega_g(\delta)\to0\) as \(\delta\downarrow0\).
By an elementary estimate, we have
\[
\left|\int_{A_n} g(t)\,dt-\frac12\int_0^1 g(t)\,dt\right|
\le \frac14\,\omega_g(1/n)\xrightarrow[n\to\infty]{}0.
\]
Therefore,
\begin{align*}
\left|\int_{A_n} f-\frac12\int_0^1 f\right|
&\le \left|\int_{A_n} (f-g)\right|
 + \left|\int_{A_n} g-\frac12\int_0^1 g\right|
 + \frac12\left|\int_0^1 (g-f)\right|\\
&\le \|f-g\|_{L^1} + \left|\int_{A_n} g-\frac12\int_0^1 g\right| + \frac12\|g-f\|_{L^1}.
\end{align*}
We now take \(\limsup_{n\to\infty}\) to obtain \(\limsup_{n\to\infty}\big|\int_{A_n} f-\frac12\int_0^1 f\big|\le \tfrac32\varepsilon\).
Since \(\varepsilon\) is arbitrary, \(\int_{A_n} f\to \frac12\int_0^1 f\) holds.
The identity for \(B_n\) follows immediately from \(\int_{B_n} f=\int_0^1 f-\int_{A_n} f\).
\end{proof}

\bibliographystyle{plain}
\bibliography{open2_refs}

\end{document}